\newtheorem{theorem}{Theorem}[section]
\newtheorem{corollary}{Corollary}[theorem]
\newtheorem{lemma}[theorem]{Lemma}
\newtheorem{proposition}[theorem]{Proposition}
\newtheorem*{remark}{Remark}
\newtheorem{definition}{Definition}[section]
\newtheorem{example}{Example}[section]
\newtheorem{fact}{Fact}[section]
\newcommand{\QQ}{\mathbb{Q}}
\newcommand{\RR}{\mathbb{R}}
\newcommand{\ZZ}{\mathbb{Z}}
\newcommand{\CC}{\mathbb{C}}
\newcommand{\HH}{\mathbb{H}}
\DeclareMathOperator{\Nm}{Nm}
\DeclareMathOperator{\Cor}{Cor}
\title{K3 surfaces associated to Abelian Fourfolds of Mumford's Type}
\author{Yuwei Zhu}
\begin{document}
\maketitle

\begin{abstract}

Mumford constructed a family of abelian fourfolds with special stucture not characterized by endomorphism ring. Galluzzi showed that the weight 2 Hodge structure of such a variety decomposes into Hodge substructures via the action of Mumford-Tate group, one of which is of K3 type with Hodge number $(1,7,1)$. We will compute the intersection form of such a Hodge structure and provide a canonical $\ZZ$-Hodge structure on this subspace. Furthermore, we shall show two applications of this invariant. Firstly, using the above formula we shall show that every K3 surface obtained in this way will admit an elliptic fibration. Comparing them with the list by Shimada, we will show that the Mordell-Weil group of all elliptic fibrations have either 2-torsion or trivial torsion group, and the conditions of the occurance of 2-torsions can be specified. Secondly, we shall use it to determine the quaternion that gives a special CM abelian fourfold that is derived equivalent to the jacobian of $y^2 = x^9 -1$.
\end{abstract}

\section{Introduction}

In \cite{Mum69} Mumford constructed a family of abelian fourfolds with extra Hodge cycles that are not generated by the intersection of divisors. Furthermore, the Mumford-Tate group of a generic member is not $Sp_8({\QQ})$ while the endomorphism ring of such a member is $\ZZ$. Since then, a more explicit characterization has been made with regard to special cases (cf. \cite{Noo00} for the discussion on $l$-adic representations for the CM case and \cite{Gal99} for the explicit description of CM cases satisfying some arithmetic conditions), but a universal formula for computing meaningful invariants for general cases remains unknown.

It is a general fact that a quaternion $B$ defined over a number field $K$ can be uniquely determined by its places of ramification $Ram(B)$, which is the set of places such that locally the quaternion is not isomorphic to $2 \times 2$ matrix. The product of all finite places in $Ram(B)$ is called the discriminant of $B$, denoted $disc(B)$. It is also a general fact that for a quaternion there exists at least one integral model of $B$, called a maximal order $O$ of $B$.

The construction given by Mumford goes as follows: Suppose $B$ a quaternion defined over a totally real cubic field $K$ satisfies the following two conditions: 1) $B \otimes_{\QQ} \RR \cong \HH \oplus \HH \oplus M_2(\RR)$ and 2) $\Cor_{K/\QQ}B$ is trivial. Then the norm-1 subgroup $G$ of $B$, regarded as a $\QQ$-group, can be realized as a Mumford-Tate group of an abelian fourfold. Classical result in Lie theory states that the Lie algebra of $G$, when we view $G$ as a $K$-group is a vector space of $K$-dimension $3$, which coincides with the trace zero space $B^0 \subset B$. $B^0$ is therefore naturally endowed with a $K$-quadratic form invariant under the action of $G$, namely the Killing form $Q_B$.

This paper is organized as follows: In section 2 of this paper we shall recall some basic facts about quaternions, Mumford's construction and Galluzzi's result over $\CC$. In section 3, we will define an action called corestriction (of quadratic spaces) $$\Cor_{K/\QQ}: \{K \text{-quadratic spaces}\} \rightarrow \{\QQ \text{-quadratic spaces}\} $$ which is compatible with the corestriction of central simple algebras given by Mumford. Using this action we will give an explicit description of $V_{K3}$ over $\QQ$ and write down a representative of the intersection forms on $V_{K3}$, which is obtained by performing corestriction on the quadratic space $(B^0, Q_B)$, and it gives exactly the $\QQ$-space Galluzzi calculated over $\CC$. We will also calculate the discriminant of this representative form, and construct a canoncial lattice from a maximal order $O$:

\begin{theorem}
$V_{K3}$ along with the canonical intersection form is given by the corestriction of the Lie algebra of $G$ and the Killing form: $$(V_{K3}, Q_{\text{can}}) = \Cor_{K/\QQ}(B^0, Q_B)$$
Furthermore, if we are given a maximal order $O$ of $B$, there is a canonical lattice $\Lambda_{can}$ obtained by intersecting $B^0$ with $O$. The discriminant of $\Lambda_{can}$ with respect to the canonical intersection form is $$disc(\Lambda_{can}) = -2^3disc(K/\QQ)^3 Norm_{K/\QQ}(disc(B))^2$$
\end{theorem}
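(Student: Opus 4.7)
The plan is to first establish $(V_{K3}, Q_{\text{can}}) = \Cor_{K/\QQ}(B^0, Q_B)$ by matching complex realizations and invoking uniqueness of the rational form, and then extract the discriminant formula from the standard trace-form identity combined with a local analysis of the maximal order.

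For the identification, I would appeal to Galluzzi's decomposition of $V_{K3} \otimes \CC$ as $\bigoplus_{\sigma: K \hookrightarrow \CC}(B^0 \otimes_{K,\sigma} \CC)$, each summand being the adjoint representation $\mathfrak{sl}_2(\CC)$ of the corresponding factor of $G_\CC$. Because the corestriction of quadratic spaces defined in Section~3 is essentially scalar restriction equipped with the trace of the $K$-bilinear form, $\Cor_{K/\QQ}(B^0) \otimes \CC$ realizes the same complex $G$-representation, and uniqueness of the $\QQ$-structure by Galois descent gives $V_{K3} \cong \Cor_{K/\QQ}(B^0)$ over $\QQ$. The $G$-invariance of $Q_B$ corestricts to a $G$-invariant form on $V_{K3}$, which by irreducibility must agree with $Q_{\text{can}}$ up to a scalar; the scalar is pinned down by comparing to Galluzzi's explicit form at one complex point.

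For the discriminant, the key tool is the classical identity
\[
\text{disc}_\ZZ(\Lambda, \text{tr}_{K/\QQ} \circ q) = \text{disc}(K/\QQ)^n \cdot \Nm_{K/\QQ}(\text{disc}_{O_K}(\Lambda, q))
\]
valid for any $O_K$-lattice $\Lambda$ of $K$-rank $n$ with $K$-bilinear form $q$. Applied to $(\Lambda_{\text{can}}, Q_B) = (O \cap B^0, Q_B)$ with $n = 3$, this accounts for the $\text{disc}(K/\QQ)^3$ factor and reduces the theorem to computing $\text{disc}_{O_K}(O \cap B^0, Q_B)$. I would combine three inputs: (i) for a maximal order, $\text{disc}_{O_K}(O, \text{trd}) = \text{disc}(B)^2$ by definition of the reduced discriminant; (ii) $O_K \cdot 1$ and $B^0$ are orthogonal under $\text{trd}$, since $\text{trd}(a \cdot x) = a\,\text{trd}(x) = 0$ for $x \in B^0$; and (iii) $\text{disc}_{O_K}(O_K \cdot 1, \text{trd}) = (2)$. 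Over $O_K[1/2]$ the orthogonal decomposition $O = O_K \cdot 1 \oplus (O \cap B^0)$ holds, and multiplicativity of discriminants yields $\text{disc}_{O_K}(O \cap B^0, Q_B) = -2 \cdot \text{disc}(B)^2$ as a class in $K^*/(O_K^*)^2$, the sign coming from the hyperbolic structure of the trace-zero plane $\langle E, F\rangle$ in $\mathfrak{sl}_2$. Taking $\Nm_{K/\QQ}$ and multiplying by $\text{disc}(K/\QQ)^3$ gives the asserted formula.

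The principal obstacle is the local analysis at primes $\mathfrak{p}$ of $O_K$ above $2$, where the orthogonal splitting $O = O_K \cdot 1 \oplus (O \cap B^0)$ may fail integrally and one must compute $\text{disc}_{O_K}(O \cap B^0, Q_B)_\mathfrak{p}$ directly from a local Gram matrix. In the split case one uses $\mathfrak{sl}_2(O_{K_\mathfrak{p}})$ with basis $H, E, F$ to obtain Gram determinant $-2$; in the ramified case one invokes the structure $O_\mathfrak{p} = O_L + O_L \Pi$ for $L$ the unramified quadratic extension of $K_\mathfrak{p}$, producing an orthogonal basis $\{\zeta, \Pi, \zeta\Pi\}$ of $(O \cap B^0)_\mathfrak{p}$ whose Gram determinant supplies both the expected power of $\mathfrak{p}$ and the factor $-2$. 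Globally matching these local calculations so as to pin down the asserted sign is the most delicate piece of bookkeeping.
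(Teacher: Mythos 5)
Your discriminant computation follows essentially the same route as the paper: reduce the $\ZZ$-discriminant to the $\mathcal{O}_K$-discriminant of $(O\cap B^0,\mathrm{trd})$ via the transfer identity $\det(Q_0)=disc(K/\QQ)^n\Nm_{K/\QQ}(\det Q)$ (the paper's Lemma on $\Cor_{K/\QQ}$ of quadratic spaces), then prove $disc(O\cap B^0)=2\mathfrak{D}^2$ by splitting off $\mathcal{O}_K\cdot 1$ orthogonally away from $2$ and doing a local analysis at dyadic places. Your local Gram computation with the basis $H,E,F$ of $\mathfrak{sl}_2(\mathcal{O}_{K_\mathfrak{p}})$ at split dyadic places is a legitimate substitute for the paper's global device (an element $\gamma_0\in O$ of reduced trace $1$, giving $\mathcal{O}_K\oplus(O\cap B^0)$ as a sublattice of $O$ with index ideal $(2)$), and your ramified dyadic case coincides with the paper's. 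One small simplification you are missing on the sign: the sign of the global $\ZZ$-determinant is not most naturally assembled from finite local Gram determinants; it is forced by the archimedean data, since $B\otimes\RR\cong\HH\oplus\HH\oplus M_2(\RR)$ makes $Q_B$ negative definite at two real places and of signature $(2,1)$ at the third, so $Q_{can}$ has signature $(2,7)$ and the determinant is negative, consistent with $-2^3disc(K/\QQ)^3\Nm_{K/\QQ}(\mathfrak{D})^2$.

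There is, however, a genuine gap in your identification step. You argue that $\Cor_{K/\QQ}(B^0)\otimes\CC$ and $V_{K3}\otimes\CC$ are the same complex representation and then invoke ``uniqueness of the $\QQ$-structure by Galois descent.'' That step fails as stated: a complex representation of a $\QQ$-group can admit several non-isomorphic $\QQ$-forms, and this is precisely the situation here --- the paper's Corollary 2.1 emphasizes that $V_{K3}$ is a \emph{non-split} $\QQ$-form of $W_{2,0,0}\oplus W_{0,2,0}\oplus W_{0,0,2}$, so the split form is a second $\QQ$-form with the same complexification. Matching complexifications therefore cannot identify $\Cor_{K/\QQ}(B^0)$ with $V_{K3}$ over $\QQ$. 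What the paper actually does is exhibit a $\QQ$-linear $G$-equivariant embedding of $\Cor_{K/\QQ}(B^0)$ into $\Cor_{K/\QQ}(B)\cong M_8(\QQ)\cong V\otimes V$ via the map $\phi(\gamma)=\gamma^{(1)}\otimes 1\otimes 1+1\otimes\gamma^{(2)}\otimes 1+1\otimes 1\otimes\gamma^{(3)}$, and then use that the relevant isotypic subspace of $V\otimes V$ is one-dimensional over $\QQ$ in the appropriate sense, so the image must be $V_{K3}$; uniqueness of the $G$-invariant form on this absolutely multiplicity-free piece then pins down $Q_{can}$ as the corestriction of the Killing form. You need some version of this concrete embedding (or another argument locating your $\QQ$-form inside $H^2$) to close the gap.
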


We will apply this invariant in two ways in section 4. With some input from the theory of elliptic fibrations on K3 surface and basic algebraic number theory, we can use the above formula to prove the following theorem:
\begin{theorem}
Any K3 surface with its transcendental Hodge structure arising from Mumford's construction admits elliptic fibrations with section, and the torsion parts of the Mordell-Weil group of such fibrations are always trivial if the intersection form is $Q_{can}$, except when the canonical intersection form can be realized as the discriminant form of a rank $3$ lattice. In that case, the Mordell-Weil torsion group can be $\ZZ/2\ZZ$.
\end{theorem}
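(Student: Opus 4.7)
The transcendental lattice $T := \Lambda_{\mathrm{can}}$ has rank $9$ and signature $(2,7)$ (inherited from the Hodge numbers $(1,7,1)$), so the N\'eron--Severi lattice $NS(X)$ has complementary rank $13$ and signature $(1,12)$. Since $H^2(X,\ZZ)$ is unimodular, the discriminant forms of $NS(X)$ and $T$ are negatives of one another, and in particular $|A_{NS}| = 2^3\,\mathrm{disc}(K/\QQ)^3\,\mathrm{Nm}_{K/\QQ}(\mathrm{disc}(B))^2$ by Theorem 1.1. Because $\ell(A_{NS}) = \ell(A_T) \leq 9 < 11 = \mathrm{rank}(NS) - 2$, Nikulin's embedding criterion splits $NS(X)$ as $U \oplus W$ with $W$ the orthogonal complement of a hyperbolic plane; this yields an elliptic fibration $\pi \colon X \to \PP^1$ with section.

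Let $R \subseteq W$ be the root sublattice generated by components of reducible fibers not meeting the zero section. By Shioda--Tate, $MW(\pi) \cong W/R$ and $MW(\pi)_{\mathrm{tors}} = \widetilde{R}/R$, where $\widetilde{R}$ is the saturation of $R$ in $W$. This torsion injects into the finite discriminant group $A_R$, and Nikulin's gluing formalism then constrains its image via isotropic subgroups of $q_W$. Since $U$ is unimodular, $A_W \cong A_{NS} \cong -A_T$ canonically, so the Mordell--Weil torsion is ultimately controlled by the explicit form $q_T$ produced by Theorem 1.1.

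I bound this torsion prime by prime. For odd primes $p$, the $p$-part of $|A_T|$ decomposes into a cube contribution (from $\mathrm{disc}(K/\QQ)^3$) and a square contribution (from $\mathrm{Nm}(\mathrm{disc}(B))^2$); a local computation with the corestricted Killing form will show that $q_{T,p}$ admits no nonzero isotropic element compatible with an index-$p$ overlattice of a root sublattice, ruling out odd-prime MW torsion entirely. At $p = 2$ the factor $2^3$ allows at most a single isotropic summand of order $2$ in $q_{T,2}$, giving the bound $MW(\pi)_{\mathrm{tors}} \subseteq \ZZ/2\ZZ$.

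A genuine $\ZZ/2\ZZ$-torsion section exists exactly when the isotropic element identified above is realized by an actual index-two overlattice of a root sublattice in $W$. By Nikulin's existence theorem for even lattices with prescribed discriminant form, this realizability is equivalent to $q_T$ being the discriminant form of some even lattice of rank $3$. Cross-referencing with Shimada's classification of elliptic fibrations on K3 surfaces in Picard rank $13$ then confirms both the occurrence of $\ZZ/2\ZZ$ in the boundary case and its absence elsewhere. I expect the main obstacle to be precisely this last matching step: translating the explicit $Q_{\mathrm{can}}$ from Theorem 1.1 into the invariants appearing in Shimada's tables and verifying the biconditional case by case. The earlier steps---existence of the fibration and the prime-by-prime discriminant analysis---should follow standard Nikulin--Shioda--Tate machinery once Theorem 1.1 is in hand.
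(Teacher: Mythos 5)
Your existence argument is essentially the paper's: both split a hyperbolic plane off $NS(S)$ by comparing rank with the length of the discriminant group (the paper quotes Kondo together with Nikulin's splitting criterion), so that part is sound. The genuine gap is in the torsion analysis, where you defer both decisive steps to computations you do not carry out. You claim odd-prime torsion is excluded by ``a local computation with the corestricted Killing form'' on $q_{T,p}$; but that is not where the exclusion comes from. Mordell--Weil torsion is $T'/T$ for the trivial lattice $T$, and which torsion groups can occur is governed by which root configurations $\bigoplus_{v} T_v$ of rank at most $10$ admit torsion sections on an elliptic K3 --- not by the odd part of $q_T$ directly. The paper's actual input is Shimada's classification (Theorems 2.14 and 7.1 of \cite{Shi05}): among realizable configurations of rank at most $10$, only $8A_1$, $9A_1$ and $A_3+6A_1$ carry nontrivial torsion, and that torsion is $\ZZ/2\ZZ$ (odd torsion such as $\ZZ/3\ZZ$ forces far too large a root lattice to fit in a rank-$13$ N\'eron--Severi group). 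Your ``prime-by-prime'' bound at $p=2$ is likewise only asserted; the factor $2^3$ in $|A_T|$ does not by itself cap the torsion at $\ZZ/2\ZZ$.

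The reduction of the exceptional case to ``$q_{can}$ is the discriminant form of a rank-$3$ lattice'' is also not, as you suggest, an abstract application of Nikulin's existence theorem; in the paper it falls out of eliminating two of the three candidate configurations. The arithmetic point you are missing is that $\mathrm{disc}(K/\QQ)$ for a totally real cubic field is never a power of $2$, so the odd part of $\mathrm{disc}(\Lambda_{can}) = 2^3\,\mathrm{disc}(K/\QQ)^3\,\mathrm{Nm}_{K/\QQ}(\mathfrak{D})^2$ is nontrivial and must be carried by the essential lattice $L(S)$, since the three candidate root lattices all have $2$-power discriminant. This forces $\operatorname{rank} L(S) \geq 3$, which kills $9A_1$ and $A_3+6A_1$ (each of rank $9$, leaving an essential lattice of rank only $2$) and leaves $8A_1$ as the sole survivor; the rank-$3$ essential lattice is then what must realize the relevant discriminant form, and that is the source of the ``rank $3$'' condition in the statement. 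Without Shimada's finite list and this discriminant-of-$K$ argument, your proposal does not close.
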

Next we are going to use a twisted form of this invariant to pinpoint the quaternion that gives the special Mumford fourfold $A_M$ that is proven to be derived equivalent to Shioda's fourfold $A_S$ in \cite{myself}:

\begin{theorem}
$A_M$ lies on the Shimura curve given by the quaternion $B$ defined over the totally real cubic subfield $\QQ(\zeta_9)$ that ramifies only at two infinite places.
\end{theorem}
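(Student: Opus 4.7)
The plan is to combine Theorem 1.1 with the derived equivalence $A_M \sim A_S$ proved in \cite{myself}: Theorem 1.1 converts a quaternion $B$ into the integral K3-type intersection form, while the derived equivalence lets me compute that invariant for $A_M$ directly from $A_S = \mathrm{Jac}(y^2 = x^9-1)$, whose Hodge theory is completely explicit.

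First I would fix $K = \QQ(\zeta_9 + \zeta_9^{-1})$, the totally real cubic subfield of $\QQ(\zeta_9)$, and check that $A_S$ sits on a Mumford-type Shimura curve over $K$: the $\ZZ[\zeta_9]$-action on $H^1(A_S, \QQ)$ together with complex conjugation on $\zeta_9$ singles out a quaternion $B$ over $K$ whose norm-one group is of Mumford type. I would then read off the K3-type substructure $V_{K3} \subset H^2(A_S, \QQ)$ from the explicit description of this CM Jacobian and compute its intersection form on the natural integral lattice coming from the principal polarization.

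Second, since a Fourier--Mukai equivalence of abelian varieties induces an isomorphism of rational Hodge structures on the full cohomology algebra (up to the twist given by the Mukai vector of the kernel), this computation transfers to $A_M$, so $\operatorname{disc}(\Lambda_{can}(A_M))$ equals the value just computed on $A_S$. Plugging into
\[
\operatorname{disc}(\Lambda_{can}) = -2^3\,\operatorname{disc}(K/\QQ)^3\,\Nm_{K/\QQ}(\operatorname{disc}(B))^2
\]
with $\operatorname{disc}(K/\QQ) = 81$, I expect to read off $\Nm_{K/\QQ}(\operatorname{disc}(B)) = 1$, forcing $\operatorname{disc}(B) = (1)$, so that $B$ is unramified at every finite place of $K$. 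Combined with Mumford's constraints that $B \otimes_{\QQ} \RR \cong \HH \oplus \HH \oplus M_2(\RR)$ and $\Cor_{K/\QQ} B$ is trivial in the Brauer group, this identifies $B$ uniquely as the quaternion over $K$ ramifying exactly at the two real places where it becomes Hamiltonian.

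The main obstacle is the transfer step: derived equivalences of abelian varieties give rational Hodge isomorphisms of the entire cohomology algebra, but the induced map on the K3-type component can differ from the geometric one by a Brauer-type twist coming from the Mukai kernel. This is presumably what the introduction alludes to by a \emph{twisted} form of the invariant; the delicate point is to track that twist precisely enough that what one compares are the integral discriminants and not merely the rational Hodge structures. Once this bookkeeping is in place, uniqueness of $B$ drops out immediately from the discriminant formula.
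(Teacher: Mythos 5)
Your broad strategy --- compute the discriminant of the K3-type lattice on the $A_S$ side, transfer it to $A_M$, and use it to constrain $\mathrm{disc}(B)$ --- is the same as the paper's, but both of your key steps diverge from what the paper actually does, and each divergence hides a genuine gap. First, the transfer. The paper does not invoke the derived equivalence at all: it uses the explicit construction of $A_M$ from $A_S$ in \cite{myself}, in which $A_M$ is obtained by switching a complex-conjugate pair of embeddings in the CM decomposition $E_{\CC} = \CC^{\Sigma}\oplus\CC^{\overline{\Sigma}}\oplus\CC^{\tau}\oplus\CC^{\overline{\tau}}$, and observes that the relevant form is of type $(2,2)$ and hence invariant under this perturbation, so the computation can be carried out entirely on $A_S$ (in fact on the threefold factor $A_0$, since the elliptic-curve weight contributes only to the $27$-dimensional piece of $\mathrm{Sym}^2$). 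Your Fourier--Mukai transfer, with its unresolved ``Brauer-type twist from the Mukai kernel,'' is exactly the part you would still have to supply, and the paper shows it is unnecessary.

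Second, and more seriously, your plan to plug the computed discriminant into the formula $\mathrm{disc}(\Lambda_{can}) = -2^3\,\mathrm{disc}(K/\QQ)^3\,\Nm_{K/\QQ}(\mathrm{disc}(B))^2$ and read off $\Nm_{K/\QQ}(\mathrm{disc}(B)) = 1$ does not go through as stated. The lattice induced on $V_{K3}$ by $H^1(A_M,\ZZ)$ is not a priori the canonical lattice $\Cor_{K/\QQ}(O\cap B^0)$ for a maximal order, and it carries the \emph{twisted} form $Q_{twi}$ of Section 3.3 rather than $Q_{can}$; the paper's actual computation yields $\mathrm{disc}(\Lambda) = 2^3\cdot 3^9\cdot 3^{12} = 2^3\cdot 3^{21}$, which is not of the shape $2^3\cdot 3^{12}\cdot\Nm(\mathfrak{D})^2$, so no clean cancellation occurs. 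What the paper extracts from the computation is only the \emph{prime support}: since $\mathrm{disc}(\Lambda)$ involves only $2$ and $3$, the quaternion $B$ can ramify only at places above $2$ and $3$; then, because $2$ is inert and $3$ is totally ramified in $K$, each rational prime has a single place of $K$ above it, and the triviality of $\Cor_{K/\QQ}(B)$ forces the local invariant of $B$ at that unique place to vanish. That arithmetic step --- splitting behavior of $2$ and $3$ in $\QQ(\zeta_9)^{+}$ combined with the corestriction condition --- is the real engine of the proof and is absent from your proposal; without it (or without a genuine identification of your lattice with $\Lambda_{can}$), the discriminant formula alone does not pin down $\mathrm{disc}(B)$.
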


\renewcommand{\abstractname}{Acknowledgments}

\begin{abstract}
The author would like to thank Prof. Kiran Kedlaya for pointing to the subject of Mumford's construction, Prof. John Voight for the many helpful and enlightening discussions on the arithmetic properties of quaternion algebras, Prof. Noam Elkies for pointing to a neglected case in the K3 fibrations, and Prof. Brendan Hassett for his valuable suggestions and patient guidance throughout the years. The author is supported by the NSF grants DMS-1551514 and DMS-1701659.
\end{abstract}

\section{Background}

\subsection{Arithmetics of quaternions over number fields}

The main reference for this section is John Voight's book on quaternion algebras \cite{Voi17}.

\begin{definition} (Definition 2.2.1 in \cite{Voi17})
An algebra $B$ over a field $K$ is called a \textbf{quaternion algebra} if there is a basis $1, \alpha, \beta, \alpha\beta$ for $B$ as a vector space over $K$ such that $$\alpha^2 = a, \beta^2 = b, \alpha\beta = -\beta\alpha$$ for some $a, b \in K^{\times}$. We denote such a quaternion to be $\left (\frac{a,b}{K} \right )$.
\end{definition}

It is a standard fact that quaternions classify $K$-forms of $PSL_2$, which is also classified by $K$-forms of $\mathbb{P}^1$, namely the Brauer-Severi varieties of dimension 1 (cf. Chapter III Example 1.4 and Exercise 3 in \cite{Ser13}). Since being a class of such $K$-forms implies that there exists some field $F/K$ such that $B\otimes F = M_2(F)$, we can define the \textbf{reduced trace of an element} $\gamma$ to be the following:  $trd(\gamma) := tr(\gamma \otimes 1)$, where $tr$ is the usual trace of a matrix and $\gamma \otimes 1 \in M_2(F)$. The space of all reduced trace zero elements is denoted $B^0$. Similarly we can define the analogue of determinant on $B$, which is called \textbf{reduced norm}, denoted $nrd$.

The connection between a quaternion algebra up to isomorphism and a Brauer-Severi variety of dimension 1 up to birational equivalence is made explicit in the following definition:

\begin{definition}
The quadratic form $ax^2 + by^2 = z^2$ where $a, b$ coming from $B = \left (\frac{a,b}{K} \right )$ is called the \textbf{conic associated to the quaternion $\left (\frac{a,b}{K} \right )$}. The places (finite or infinite) where the conic does not have a solution are called places of ramification. The set of all such places is denoted $Ram (B)$.

(Definition 14.5.4 in \cite{Voi17}) The product of all finite places in $Ram (B)$ is called the discriminant of $B$, denoted $\mathfrak{D}_B$ or $\mathfrak{D}$ if $B$ is understood. It is an ideal in $\mathcal{O}_K$
\end{definition}

Combining these facts with Hasse principle we have the following theorem:

\begin{theorem} (Main Theorem 14.6.1 in \cite{Voi17}) Let $K$ be a global field. Then the map $B \mapsto Ram (B)$ gives a bijection between quaternions over $K$ up to isomorphisms and finite, even cardinality subsets of noncomplex places of $K$.

\end{theorem}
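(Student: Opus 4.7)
The plan is to deduce this classification from the local-global theory of the Brauer group, which is the standard route and is how Voight develops it. I would first fix the language: a quaternion $B$ over $K$ is ramified at a place $v$ exactly when $B \otimes_K K_v$ is a division algebra, and the local classification of quaternions over a local field says that at each non-complex place there is exactly one non-split quaternion up to isomorphism (and at a complex place none, since $\CC$ is algebraically closed). Thus $Ram(B)$ is always a subset of the non-complex places of $K$, and two quaternions $B, B'$ over $K$ have the same local invariants if and only if $Ram(B) = Ram(B')$.

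Next I would establish injectivity: if $Ram(B) = Ram(B')$, then $B_v \cong B'_v$ for every place $v$, so $B$ and $B'$ determine the same class in the Brauer group $Br(K)$ after noting that $B$ is equivalent to $B'$ iff $B \otimes_K B'^{op}$ is split, and splitting can be checked locally by the Albert--Brauer--Hasse--Noether theorem. Since $B$ and $B'$ have the same $K$-dimension, Brauer equivalence upgrades to isomorphism. This step is essentially packaging the Hasse principle for central simple algebras.

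The parity constraint comes from reciprocity. Writing $B = \left(\frac{a,b}{K}\right)$, ramification at a place $v$ is equivalent to the Hilbert symbol $(a,b)_v$ being $-1$. The product formula $\prod_v (a,b)_v = 1$ (Hilbert reciprocity, a form of global class field theory) then forces the number of places with $(a,b)_v = -1$ to be even, so $|Ram(B)|$ is even.

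The main obstacle, and the part I expect to require genuine work, is surjectivity: given any finite even-cardinality set $S$ of non-complex places, one must construct a quaternion $B$ with $Ram(B) = S$. The strategy is to prescribe Hilbert symbols place-by-place and glue. Concretely, one chooses $a \in K^\times$ with appropriate sign/square-class data at the places in $S$, then invokes weak approximation together with Chebotarev or a suitable density statement to produce $b \in K^\times$ whose Hilbert symbol with $a$ realizes the prescribed local invariants. The parity condition on $|S|$ is exactly the global obstruction that makes such a $b$ exist, by Hilbert reciprocity applied in reverse. Packaged together, injectivity, the even-cardinality constraint, and this construction give the claimed bijection.
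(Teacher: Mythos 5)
Your sketch is correct and is essentially the argument the paper is pointing to: the paper gives no proof of its own, simply quoting Main Theorem 14.6.1 of Voight with the remark that it follows by ``combining these facts with Hasse principle,'' and your route through the local classification, the Albert--Brauer--Hasse--Noether injectivity, Hilbert reciprocity for the parity constraint, and an approximation argument for surjectivity is exactly the standard development in the cited source. No discrepancy to report.
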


\subsection{Mumford's construction}

In this section we introduce the main ingredients of Mumford's construction in \cite{Mum69}. We denote $K$ to be the totally real cubic extension. $K^{(1)}$, $K^{(2)}$, $K^{(3)}$ denotes the three embeddings of $\QQ$-algebras $\sigma_i: K \hookrightarrow \overline{\QQ}$.

Given a quaternion $B$ we define the action of taking \textbf{corestriction}. Let $B^{(i)} = B \otimes_K K^{(i)}$ be the quaternion algebra over $\overline{\QQ}$. Tensoring them together we get a central simple algebra $D = B^{(1)} \otimes B^{(2)} \otimes B^{(3)}$. The Galois group $Gal(\overline{\QQ}/\QQ)$ has a natural semilinear action on $D$ given by the following: if $\tau: \overline{\QQ} \rightarrow \overline{\QQ}$ is an element in $Gal(\overline{\QQ}/\QQ)$, then $\tau \circ \sigma_i = \sigma_{\pi(i)}$ for some permutation $\pi$. Similarly $\tau$ acts on $\overline{\QQ}$, which then gives a semilinear isomorphism $B^{(i)} \xrightarrow{\sim} B^{\pi(i)}$, thus a semilinear automorphism on $D$. We define the elements fixed by such Galois actions in $D$ to be $\Cor_{K/\mathbb{Q}}(B)$. It is now a central simple algebra defined over $\mathbb{Q}$.

Imitating the norm map and the trace map in number theory, there are immediately two flavors of invariant elements in $\Cor_{K/\mathbb{Q}}(B)$ that we could obtain from $B$: the image of the norm map 
$$\begin{array}{cccc}
\Nm: & B & \rightarrow & \Cor_{K/\QQ}(B) \\
 & \gamma & \mapsto & \gamma^{(1)} \otimes \gamma^{(2)} \otimes \gamma^{(3)} \\
\end{array}$$
and the sum of Galois orbits:
$$\phi: \gamma \mapsto \gamma^{(1)} \otimes 1\otimes 1 + 1\otimes \gamma ^{(2)} \otimes 1 + 1\otimes 1\otimes \gamma ^{(3)}$$
Mumford's construction of abelian fourfold goes as follows: we start with a quaternion $B$ satisfying these two conditions:
$$ \Cor_{K/\mathbb{Q}}(B) \cong M_8(\mathbb{Q})$$
$$ B \otimes_{\mathbb{Q}} \mathbb{R} \cong \mathbb{H} \oplus \mathbb{H} \oplus M_2({\mathbb{R}})$$
Now we let $G$ denote the reduced norm one elements in $B$ i.e. 
$$G := \left\{ \gamma \in B | nrd(\gamma) = 1 \right\}$$
We view $G$ as a $\QQ$ group. $G \otimes_{\mathbb{Q}} \mathbb{R} \cong SU(2) \times SU(2) \times SL_2(\mathbb{R})$. The map $Nm$ maps it to a subgroup in $GL_8(\mathbb{Q})$, which yields to an algebraic representation of $G$ in $V = \mathbb{Q}^8$ defined over $\mathbb{Q}$. It is a $\mathbb{Q}$-form of $\mathbb{R}$-representation:
$$SU(2) \times SU(2) \times SL_2(\mathbb{R}) \rightarrow SO(4) \times SL_2(\mathbb{R} ) \subset Sp_8(\RR)$$
Therefore $G_{\RR}$ fixes a unique symplectic form that can be explicitely written down over $\RR$. To see that the symplectic form is unique, one could calculate the $H^2 \otimes \CC = \wedge^2 V_{\CC}$ as a representation of $G_{\CC}$, and see that indeed there is only one copy of trivial representation.

The embedding of Deligne's torus $\mathbb{S} := Res_{\mathbb{C}/\mathbb{R}}(\mathbb{C}^*)$ is given by the map $$h: e^{i\theta} \rightarrow  I_4 \otimes \left ( \begin{array}{cc}
cos\theta & sin\theta \\
-sin\theta & cos\theta   \end{array} \right)$$

Now, take an arbitrary lattice $\Lambda$ in $V$ and let the monodromy group $\Gamma$ be given by $\Gamma := GL(\Lambda, \mathbb{Q}) \cap G$. Then $(V, G, \Nm, h, \Lambda, \Gamma)$ gives the necessary data for a polarized abelian fourfold $A$.

\subsection{Galluzzi's Analysis on the representation $\Nm$}

By analyzing the behavior of the roots, in \cite{Gal00} Galluzzi proves the following results.

\begin{proposition}
(Proposition 3.1 in \cite{Gal00}) The representation of $G_{\mathbb{C}}$ induced by $\Nm$ on $V_{\mathbb{C}} = \mathbb{C}^8$ is given by $$V_{\mathbb{C}} \cong W_{1,0,0} \boxtimes W_{0,1,0} \boxtimes W_{0,0,1} $$
\end{proposition}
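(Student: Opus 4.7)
The plan is to base-change everything from $\QQ$ to $\CC$ and unpack the definition of $\Nm$. Since $K$ is a totally real cubic field, $K \otimes_\QQ \CC \cong \CC^3$ via the three embeddings $\sigma_i\colon K \hookrightarrow \CC$, so
$$B \otimes_\QQ \CC \;\cong\; \bigoplus_{i=1}^{3} B^{(i)}_\CC,$$
and each summand $B^{(i)}_\CC := B \otimes_{K,\sigma_i} \CC$ is isomorphic to $M_2(\CC)$ because quaternion algebras split over algebraically closed fields. Taking reduced-norm-one elements of each factor yields the expected identification
$$G_\CC \;\cong\; SL_2(\CC) \times SL_2(\CC) \times SL_2(\CC).$$

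For the corestriction side, note $D = B^{(1)} \otimes B^{(2)} \otimes B^{(3)} \cong M_2(\overline{\QQ})^{\otimes 3} \cong M_8(\overline{\QQ})$, and the hypothesis $\Cor_{K/\QQ}(B) \cong M_8(\QQ)$ makes this a trivial $\QQ$-form. Extending scalars to $\CC$ gives $\Cor_{K/\QQ}(B) \otimes_\QQ \CC \cong M_2(\CC)^{\otimes 3}$, whose unique simple left module decomposes canonically as
$$V_\CC \;\cong\; \CC^2 \otimes_\CC \CC^2 \otimes_\CC \CC^2.$$
By construction the norm map satisfies $\Nm(\gamma) = \gamma^{(1)} \otimes \gamma^{(2)} \otimes \gamma^{(3)}$, so an element $(\gamma_1,\gamma_2,\gamma_3) \in SL_2(\CC)^3 = G_\CC$ acts on $V_\CC$ by simultaneous left multiplication of $\gamma_i$ on the $i$-th tensor slot.

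Since the standard two-dimensional representation of $SL_2(\CC)$ is the irreducible highest-weight-one representation, this tensor-product action is precisely $W_{1,0,0} \boxtimes W_{0,1,0} \boxtimes W_{0,0,1}$, as claimed. The only point that demands care is making sure the three splittings $B^{(i)}_\CC \cong M_2(\CC)$ can be assembled compatibly with the $\QQ$-rational structure of $\Cor_{K/\QQ}(B) \cong M_8(\QQ)$; but since the corestriction is already split over $\QQ$ by hypothesis, the relevant descent datum is trivial and the identifications can be arranged consistently, up to an inner automorphism that does not affect the isomorphism class of the representation. The main ``obstacle,'' then, is really only careful bookkeeping of tensor factors---no nontrivial Galois cohomology needs to be confronted.
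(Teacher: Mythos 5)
The paper itself offers no proof of this proposition: it is imported verbatim from Galluzzi, with the only indication of method being the remark that she proves it ``by analyzing the behavior of the roots,'' i.e., a weight-theoretic computation for the $\mathfrak{sl}_2^{\times 3}$-action on $V_{\CC}$. Your argument is a genuinely different and self-contained route, going through the structure theory of central simple algebras: since every central simple algebra over $\CC$ is a matrix algebra, $\Cor_{K/\QQ}(B)\otimes_{\QQ}\CC\cong M_2(\CC)^{\otimes 3}\cong M_8(\CC)$ has a unique simple module $\CC^2\otimes\CC^2\otimes\CC^2$, the $8$-dimensional $V_{\CC}$ must be that module, and $\Nm(\gamma)=\gamma^{(1)}\otimes\gamma^{(2)}\otimes\gamma^{(3)}$ visibly acts slot-by-slot, giving the external tensor product of the three standard representations. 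This is correct and arguably cleaner: it avoids all weight combinatorics and makes transparent exactly where the tensor decomposition comes from. One small clarification worth making: the hypothesis $\Cor_{K/\QQ}(B)\cong M_8(\QQ)$ is not what splits things over $\CC$ (that is automatic); its role is only to guarantee that the $8$-dimensional representation $V$ exists over $\QQ$ in the first place, so your closing discussion of descent data, while harmless, is not needed for the statement about $G_{\CC}$. What your approach does not buy you, and what Galluzzi's root analysis feeds directly into, is the decomposition of $\wedge^2 V_{\CC}$ and $\mathrm{Sym}^2 V_{\CC}$ used in the subsequent corollary; for that one still has to multiply out weights (or apply the Clebsch--Gordan rule factor by factor), though that step is routine once the present proposition is in hand.
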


\begin{corollary}
There is an irreducible subrepresentation of $G$ in $V \otimes V$ such that, when we base change to $\mathbb{C}$, it is isomorphic to $W_{2,0,0} \oplus W_{0,2,0} \oplus W_{0,0,2}$. Furthermore, its $\mathbb{Q}$-form is a non-split representation of $G$. Since the action of $\mathbb{S}$ is given on $V$, it can be shown that this representation is a Hodge-substructure of Hodge number $(1, 7,1)$.

\end{corollary}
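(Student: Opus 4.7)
The plan is to first use Proposition 3.1 to decompose $V_\CC \otimes V_\CC$ via Clebsch--Gordan on each of the three $SL_2$ factors. Applying $W_1 \otimes W_1 \cong W_2 \oplus W_0$ three times yields
\[
V_\CC \otimes V_\CC \;\cong\; \bigoplus_{(a,b,c) \in \{0,2\}^3} W_{a,b,c},
\]
a direct sum of eight pairwise non-isomorphic irreducible $G_\CC$-summands, each of multiplicity one. Singling out the three summands $W_{2,0,0}$, $W_{0,2,0}$, $W_{0,0,2}$ (each three-dimensional) produces the $9$-dimensional complex subspace in the statement.

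Next I would verify that this $9$-dimensional sum descends to a $\QQ$-rational, irreducible $G$-subrepresentation of $V \otimes V$. The Galois action of $\mathrm{Gal}(\overline\QQ/\QQ)$ on $G_\CC \cong SL_2(\CC)^3$ is induced by the permutation of the three embeddings $\sigma_1, \sigma_2, \sigma_3 \colon K \hookrightarrow \overline\QQ$ described in the discussion of $\Cor_{K/\QQ}$, and acts transitively on the three factors (through $\ZZ/3\ZZ$ or $S_3$ depending on whether $K$ is Galois over $\QQ$). This transitive permutation cyclically shuffles the three summands $W_{2,0,0}, W_{0,2,0}, W_{0,0,2}$ while preserving their sum, so the sum is Galois-stable and descends to a $\QQ$-subspace. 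Because Galois acts transitively on the three isotypic components and each is individually $\CC$-irreducible, there is no proper $G$-stable $\QQ$-subspace: the descent is irreducible, and equivalently non-split over $\QQ$.

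Finally, for the Hodge numbers I would compute via the explicit embedding $h(e^{i\theta}) = I_4 \otimes R(\theta)$. Writing $V = V_1 \otimes V_2 \otimes V_3$ for the three standard $SL_2$-factors, only the third tensor factor rotates, with $R(\theta)$ having eigenvalues $e^{\pm i\theta}$ on $V_3$, while the first two $SU(2)$ factors act trivially. Since $\det R(\theta) = 1$, the torus $\mathbb{S}(\RR)$ acts trivially on $W_{2,0,0} = \mathrm{Sym}^2 V_1 \boxtimes \wedge^2 V_2 \boxtimes \wedge^2 V_3$ and on $W_{0,2,0}$, so each contributes $(0,3,0)$ to the weight-$2$ Hodge diamond. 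On $W_{0,0,2} = \wedge^2 V_1 \boxtimes \wedge^2 V_2 \boxtimes \mathrm{Sym}^2 V_3$, the eigenvalues $e^{2i\theta}, 1, e^{-2i\theta}$ of $\mathrm{Sym}^2 R(\theta)$ give one each of Hodge types $(2,0), (1,1), (0,2)$, contributing $(1,1,1)$. Summing $(0,3,0)+(0,3,0)+(1,1,1) = (1,7,1)$ yields the claim.

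The main obstacle is not any individual step, but rather making precise the compatibility of the $\QQ$-structure on $V$ coming from Mumford's construction via $\Nm$ with the semilinear Galois action on $D = B^{(1)} \otimes B^{(2)} \otimes B^{(3)}$. One must verify that the $\QQ$-descent of the $9$-dimensional $\CC$-subspace really coincides with a $\Nm(G)$-stable $\QQ$-subspace of $V \otimes V$, rather than a twisted form, and this bookkeeping is the heart of the argument; the irreducibility and Hodge-number computations are essentially mechanical once the descent is in hand.
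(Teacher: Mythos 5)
Your argument is correct, and it is a more self-contained route than the one the paper takes. The paper states this corollary with no proof, deferring to Galluzzi's root-theoretic analysis in \cite{Gal00}, and then supplies the rational structure only later and only implicitly: in Section 3 it identifies $V \otimes V$ with $\Cor_{K/\QQ}(B) \cong M_8(\QQ)$ via self-duality of symplectic representations, and exhibits the $\QQ$-form of the $9$-dimensional piece concretely as the image $\phi(B^0) = \Cor_{K/\QQ}(B^0)$ of the trace-zero part of the quaternion under the Galois-orbit-sum map. Your Clebsch--Gordan decomposition of $V_\CC \otimes V_\CC$ into the eight multiplicity-one summands $W_{a,b,c}$, $(a,b,c)\in\{0,2\}^3$, followed by Galois descent of the transitively permuted triple $W_{2,0,0}\oplus W_{0,2,0}\oplus W_{0,0,2}$, proves existence and $\QQ$-irreducibility (hence non-splitness) by pure representation theory; the multiplicity-one structure is exactly what makes the descent argument clean. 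The compatibility issue you flag at the end --- that the semilinear Galois action induced by the $\QQ$-structure of $V$ really is the factor-permuting action --- is genuine, and it is precisely what the paper's corestriction formalism buys: by construction $V$ is the $\QQ$-form of the representation $\Nm$ on $\Cor_{K/\QQ}(B) \cong M_8(\QQ)$, so the Galois action on $D = B^{(1)}\otimes B^{(2)}\otimes B^{(3)}$ permutes the tensor factors by definition, and the explicit element $\phi(\gamma)$ gives a rational point in your descended subspace, confirming it is an honest $\QQ$-subspace of $V\otimes V$ and not a twisted form. Your Hodge-number computation via $h(e^{i\theta}) = I_4 \otimes R(\theta)$, with $\det R(\theta)=1$ killing the action on $\wedge^2 V_3$ and $\mathrm{Sym}^2 R(\theta)$ contributing eigenvalues $e^{2i\theta}, 1, e^{-2i\theta}$, is the standard one and agrees with the $(1,7,1)$ claimed.
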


This 9-dimensional subrepresentation will be of central interest in this paper. We will denote the $\mathbb{Q}$-form of it to be $V_{K3}$. In the following sections, we are going to calculate the intersection form induced by the symplectic form of $V$ on $V_{K3}$, construct a canonical lattice from $B$, which will give rise to a transcendental lattice of some K3 surface of Picard rank 13.

\section{The corestriction of Lie Algebra, and a canonical integral lattice}

In this section we give explicit description of the space $V_{K3}$ and an explicit formula of the intersection form based on the generators of $B$. Results in 3.1 hold true in general even if we drop the assumption that $\Cor_{K/\QQ}(B)$ is trivial, but for our convenience we shall assume $B$ does not ramify at primes lying over $2$.

\subsection{The corestriction of quadratic spaces}

The result of this section generalizes to any quaternion algebra defined over a totally real field of any degree, as long as it satisfies the condition that there is only one infinite place at which the quaternion algebra splits.

\begin{lemma}
(Proposition 4.3 in \cite{Gal00}) The Lie algebra of $G$, when viewed as a $K$ group, is the space of trace zero elements in $B$ (i.e. $B^0$), spanned by $\alpha, \beta, \alpha\beta$.
\end{lemma}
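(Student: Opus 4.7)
The plan is to identify the $K$-algebraic group $G$ with $SL_1(B)$ and then invoke the standard principle that the Lie algebra of the kernel of a homomorphism of algebraic groups is the kernel of its derivative at the identity. Concretely, over $K$ there is a short exact sequence
$$1 \to G \to GL_1(B) \xrightarrow{nrd} \mathbb{G}_{m,K} \to 1,$$
where $GL_1(B)$ denotes the $K$-group whose $R$-points are $(B \otimes_K R)^\times$. Its Lie algebra is canonically $B$ itself: an $R[\epsilon]/(\epsilon^2)$-point reducing to $1$ has the form $1 + \epsilon x$ for a unique $x \in B \otimes_K R$.

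Next I would compute the differential $d(nrd)_1 : B \to K$ and show that it equals the reduced trace $trd$. The cleanest intrinsic route uses the quadratic relation $\gamma^2 - trd(\gamma)\,\gamma + nrd(\gamma) = 0$ satisfied by every $\gamma \in B$; applied to $\gamma = 1 + \epsilon x$ and reduced modulo $\epsilon^2$, it yields
$$nrd(1 + \epsilon x) \equiv 1 + \epsilon \cdot trd(x) \pmod{\epsilon^2}.$$
Equivalently, one could pass to a splitting field and use $\det(I + \epsilon X) \equiv 1 + \epsilon\,\mathrm{tr}(X) \pmod{\epsilon^2}$, but then a descent argument is needed. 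Either way, $\mathrm{Lie}(G) = \ker(trd) = B^0$.

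Finally, for the basis $\{\alpha, \beta, \alpha\beta\}$, I would observe that the standard involution $\gamma \mapsto \bar\gamma$ on $B$ sends $\alpha \mapsto -\alpha$, $\beta \mapsto -\beta$, and $\alpha\beta \mapsto -\alpha\beta$: indeed, $\alpha$ satisfies $\alpha^2 = a \in K^\times$, so its minimal polynomial is $x^2 - a$, giving $trd(\alpha) = 0$; the computations for $\beta$ and $\alpha\beta$ are analogous, using $(\alpha\beta)^2 = -ab$. Since $trd(\gamma) = \gamma + \bar\gamma$, all three elements lie in $B^0$, and being part of the $K$-basis $1, \alpha, \beta, \alpha\beta$ of $B$ they are $K$-linearly independent, hence span the three-dimensional space $B^0$.

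I expect no genuine obstacle: once $\mathrm{Lie}(GL_1(B)) \cong B$ is in hand, everything reduces to the quadratic identity satisfied by quaternions. The only point requiring care is ensuring that the derivative computation respects the $K$-structure, which the intrinsic approach via the characteristic polynomial handles automatically.
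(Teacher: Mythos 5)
Your proof is correct, but it takes a different route from the paper's. The paper base-changes to a splitting field $F$ of $B$, identifies $G \otimes F$ with $SL_2(F)$ and invokes the classical fact that $\mathrm{Lie}(SL_2)$ is the traceless $2\times 2$ matrices, then descends using the observation that ``being traceless'' is preserved under field extension (together with the implicit compatibility $\mathrm{Lie}(G)\otimes F = \mathrm{Lie}(G\otimes F)$). You instead stay over $K$ throughout: you realize $G$ as $\ker(nrd\colon GL_1(B)\to \GG_m)$, use left-exactness of $\mathrm{Lie}$, and compute $d(nrd)_1 = trd$ intrinsically from the reduced characteristic polynomial $\gamma^2 - trd(\gamma)\gamma + nrd(\gamma)=0$ evaluated on dual numbers. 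Your dual-numbers computation checks out ($trd(1+\epsilon x) = 2 + \epsilon\,trd(x)$, so the relation forces $nrd(1+\epsilon x) = 1+\epsilon\,trd(x)$), and your verification that $\alpha,\beta,\alpha\beta$ are reduced-trace-zero and hence span $B^0$ supplies a detail the paper leaves unstated. What your approach buys is that the $K$-rationality is automatic and no descent step is needed; what the paper's approach buys is brevity, since it leans on the familiar $SL_2$ computation and the definition of $trd$, $nrd$ via a splitting field. Both are sound.
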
 

\begin{proof} It is a general fact that the Lie algebra of $SL_2(F)$ is the trace zero $2\times 2$ matrices for any number field $F$. Take $L$ to be a splitting field of $B$ (i.e. $B \otimes_K F \cong M_2(F)$). By the way we defined reduced norm and reduced trace, we have that for any $\gamma \in B$, $trd(\gamma) = tr(\gamma)$, and $nrd(\gamma) = det(\gamma)$. Since $G \otimes F \cong SL_2(F)$, we see that $Lie(G) \otimes F = B^0 \otimes F$. Noticing that being traceless is an invariant property up to field extension, we conclude that $Lie(G) = B^0$.

\end{proof}

\begin{remark}
The Killing form on $B^0$ recovers the conic associated to the quaternion algebra: with respect to the basis $\{ \frac{1}{2}\alpha, \frac{1}{2}\beta, \frac{1}{2}\alpha \beta \}$, the Killing form is given by
$$ Q_B = \left ( \begin{array}{ccc}
2a &  & \\
 & 2b & \\
 & & -2ab \\
\end{array}
 \right ) $$
We can renormalize it such that this matrix form is with respect to the basis $\{\alpha,\beta,\alpha \beta \}$
\end{remark}

We now introduce a construction called corestriction of quadratic space. Let $K$ be an arbitrary totally real field of degree $d$, $(V,Q)$ a vector space of dimension $n$ with a quadratic form $Q$. Let $\sigma_1, ..., \sigma_d$ be the $d$ distinct embeddings of $K$ into $\RR$. First we consider the space $$V \otimes_{\QQ} \RR \cong (K^{\sigma_1})^n \oplus (K^{\sigma_2})^n \oplus ... (K^{\sigma_d})^n$$ where $K^{\sigma_i}$ denotes $\RR$ with $K$ embedded into it via $\sigma_i$. Under this isomorphism, a vector $kv = v \otimes k$ in $V$ will be mapped to $ (\sigma_1(k)v^{(1)}, ..., \sigma_d(k)v^{(d)})$.
This space is endowed with a semilinear automorphism by the Galois group $G = Gal(K/\QQ)$: if $\sigma_j = \sigma_i \circ g$ for a $g \in G$, then $g$ takes $\sigma_i(k)\otimes v^{(i)}$ to $\sigma_j(k) \otimes v^{(j)}$.

\begin{definition}
The corestriction of $V$ from $K$ to $\QQ$ is defined to be the $\QQ$-subspace in $V \otimes_{\QQ} \RR$ invariant under the semilinear automorphisms by $G$. i.e.
$$\Cor_{K/\QQ} (V) = (V \otimes_{\QQ} \RR)^G $$
Similarly, use $Q^{\sigma_i}$ to denote the $\RR$-quadratic form $Q^{\sigma_i}( r_1 \otimes \sigma_i(x), r_2 \otimes \sigma_i(y)) := r_1 \cdot r_2 \sigma_i (Q(x,y))$. The corestriciton of $Q$, denoted $Q_0$ is the quadratic form on $(K^{\sigma_1})^n \oplus (K^{\sigma_2})^n \oplus ... (K^{\sigma_d})^n$ given by $Q^{\sigma_1} \oplus Q^{\sigma_2} \oplus ... Q^{\sigma_d} $. The $\QQ$ quadratic space we obtain in this way is denoted $\Cor_{K/\QQ}(V, Q)$, called \textbf{corestriction of the quadratic space $(V,Q)$}. Likewise, if $\Lambda$ is a projective $\mathcal{O}_K$ lattice in $(V,Q)$, we can define the corestriction of $\Lambda$ and denote it as $\Cor_{K/\QQ}(\Lambda, Q)$.
\end{definition}

\begin{lemma}
$Q_0$ takes coefficients in $\QQ$ on $\Cor_{K/\QQ}(V)$. If $Q$ is an $\mathcal{O}_K$ form, then $Q_0$ is integral. Furthermore, we have $$det (Q_0) = disc(K/\QQ)^n \cdot Norm_{K/\QQ}(det(Q))$$
\end{lemma}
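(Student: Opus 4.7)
The plan is to choose convenient bases on both sides and reduce the lemma to an explicit Gram-matrix computation. Fix a $K$-basis $e_1,\dots,e_n$ of $V$ with Gram matrix $M_{jj'}=Q(e_j,e_{j'})$, and a $\QQ$-basis $\omega_1,\dots,\omega_d$ of $K$ (chosen to be a $\ZZ$-basis of $\mathcal{O}_K$ when we need integrality). I would then construct the candidate Galois-invariant vectors
$$f_{j,l} \;:=\; \sum_{i=1}^d \sigma_i(\omega_l)\,e_j^{(i)} \;\in\; V\otimes_{\QQ}\RR,\qquad 1\le j\le n,\ 1\le l\le d.$$
Galois invariance is immediate from the definition of the semilinear action, and linear independence follows from invertibility of the embedding matrix $\Sigma=(\sigma_i(\omega_l))_{i,l}$ (indeed $\det(\Sigma)^2=\mathrm{disc}(K/\QQ)\neq 0$). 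Since there are $nd=\dim_\QQ \Cor_{K/\QQ}(V)$ of them, they form a $\QQ$-basis.

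Next, I would compute $Q_0$ on this basis. From the definition $Q_0=\bigoplus_i Q^{\sigma_i}$ and the fact that the $e_j^{(i)}$ live in distinct summands of the orthogonal decomposition, a one-line computation gives
$$Q_0(f_{j,l},f_{j',l'}) \;=\; \sum_{i=1}^d \sigma_i(\omega_l)\sigma_i(\omega_{l'})\sigma_i(M_{jj'}) \;=\; \mathrm{Tr}_{K/\QQ}\!\bigl(\omega_l\omega_{l'}M_{jj'}\bigr).$$
This is automatically in $\QQ$, which proves the rationality statement; moreover, if $\omega_l,\omega_{l'}\in\mathcal{O}_K$ and $M_{jj'}\in\mathcal{O}_K$, it lies in $\ZZ$, giving integrality.

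For the determinant, I would exploit that the Gram matrix $G$ of $Q_0$ in the basis $\{f_{j,l}\}$ factors through the standard basis $\{e_j^{(i)}\}$, in which $Q_0$ has the block-diagonal Gram matrix $G^{\mathrm{std}}=\mathrm{blkdiag}(\sigma_1(M),\dots,\sigma_d(M))$. With a suitable ordering, the change-of-basis matrix is the Kronecker product $C=\Sigma\otimes I_n$, so
$$\det(G) \;=\; \det(C)^2\det(G^{\mathrm{std}}) \;=\; \det(\Sigma)^{2n}\prod_{i=1}^d\sigma_i(\det M) \;=\; \mathrm{disc}(K/\QQ)^n\cdot\mathrm{Norm}_{K/\QQ}(\det M),$$
using $\det(A\otimes B)=\det(A)^{\dim B}\det(B)^{\dim A}$ and $\det(\Sigma)^2=\mathrm{disc}(K/\QQ)$.

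None of the individual steps is deep; the only place to be careful is the bookkeeping in Step 4, where one must order the bases consistently so that the change-of-basis matrix really is a clean Kronecker product and the determinant separates as claimed. Once the bases are aligned, the identity $\det(\Sigma)^2 = \mathrm{disc}(K/\QQ)$ (which is the very definition of the discriminant when the $\omega_l$ form a $\ZZ$-basis of $\mathcal{O}_K$) does all the work.
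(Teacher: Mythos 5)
Your proof is correct and follows essentially the same route as the paper: both hinge on the change-of-basis matrix $\Sigma=(\sigma_i(\omega_l))$ built from a $\ZZ$-basis of $\mathcal{O}_K$, the identity $\det(\Sigma)^2=disc(K/\QQ)$, and the factorization of the Gram matrix through the block-diagonal form $\mathrm{diag}(\sigma_1(M),\dots,\sigma_d(M))$. The only difference is presentational: the paper reduces without loss of generality to $n=1$ (diagonalizing $Q$ over $K$) before doing the $T\cdot\mathrm{diag}\cdot T^{t}$ computation, whereas you carry the general $n$ through a Kronecker product, and your explicit formula $Q_0(f_{j,l},f_{j',l'})=\mathrm{Tr}_{K/\QQ}(\omega_l\omega_{l'}M_{jj'})$ has the small added benefit of making the integrality claim immediate rather than implicit.
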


\begin{proof}
The rationality of $Q_0$ and $\Cor_{K/\QQ}(V)$ follows from the observation that they are invariant under the semilinear Galois action.

To prove the identity between $det(Q)$ and $det(Q_0)$, without loss of generality we shall assume $n=1$, $Q = (c)$.

Let $\{a_1, ..., a_d\}$ be a set of $\ZZ$ basis for $\mathcal{O}_K$. It would induce a basis of $\Cor_{K/\QQ}(V)$ in $\RR^d$ given by the base change matrix
$$ T = \left ( \begin{array} {cccc}
\sigma_1(a_1) & \sigma_1(a_2) & ... & \sigma_1(a_d) \\
\sigma_2(a_1) & \sigma_2(a_2) & ... & \sigma_2(a_d) \\
... & ... & ... & ... \\
\sigma_d(a_1) & \sigma_d(a_2) & ... & \sigma_d(a_d) \\
\end{array}
\right )
$$
And $Q_0 = T \cdot diag(\sigma_1(c), ..., \sigma_d(c)) \cdot T^t$. It is not hard to see that $T^tT$ gives the trace form of $K$. Therefore $det(Q_0) = det(T^tT) \cdot det (diag(\sigma_1(c), ..., \sigma_d(c))) = disc(K/\QQ)\cdot Norm_{K/\QQ}(c)$, as desired.
\end{proof}

In fact, it is always possible to find a copy of $\Cor_{K/\QQ}(B^0)$ inside $\Cor_{K/\QQ}(B)$ for any quaternion $B$. We simply observe that for any element $(\gamma^{\sigma_1}, \gamma^{\sigma_2}, ..., \gamma^{\sigma_d}) \in Cor_{K/\QQ}(B^0)$, we can map it to $\phi (\gamma)$ (cf. Section 2.2 of this paper).

It remains to be proven that $\Cor_{K/\QQ}(B^0, Q_B)$ is the $9$-dimensional Hodge substructure we wanted. We observe the following facts:

\begin{lemma}We use $V$ to denote the weight $1$ Hodge structure of Mumford fourfold in this lemma. Then $\Cor_{K/\mathbb{Q}}(B)$ is isomorphic to the weight two space $V \otimes V$. 
\end{lemma}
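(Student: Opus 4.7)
The plan is to combine the Mumford splitting hypothesis $\Cor_{K/\QQ}(B) \cong M_8(\QQ)$ with the principal polarization on $V$ to produce the desired isomorphism. Since the representation of $G$ on $V = \QQ^8$ is given by $\Nm: G \to \Cor_{K/\QQ}(B)^{\times} \subset GL(V)$, I would identify $\Cor_{K/\QQ}(B) = \mathrm{End}_{\QQ}(V)$ and equip it with the conjugation $G$-action $g \cdot X = \Nm(g) X \Nm(g)^{-1}$. This is essentially the only $G$-action compatible with the algebra structure, so there is no genuine ambiguity.

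Next I would invoke the canonical $G$-equivariant isomorphism $\mathrm{End}_{\QQ}(V) \cong V \otimes_{\QQ} V^{*}$, where the right-hand side carries the tensor-product $G$-action; this is essentially by definition of the dual representation and is a one-line check that conjugation $\phi \mapsto g\phi g^{-1}$ on $V \otimes V^{*}$ matches the formula $g(v \otimes f) = gv \otimes (g^{-1})^{*}f$. This already yields $\Cor_{K/\QQ}(B) \cong V \otimes V^{*}$ as $G$-modules.

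The remaining step is to identify $V^{*}$ with $V$ as a $G$-module. For this I would use the $G$-invariant symplectic form $\psi: V \otimes V \to \QQ$ coming from the polarization of the Mumford fourfold, whose existence and $\QQ$-rationality were sketched in Section 2.2: the pairing $\psi$ furnishes a $G$-equivariant isomorphism $V \xrightarrow{\sim} V^{*}$ via $v \mapsto \psi(v, \cdot)$. Tensoring with the identity then gives $\Cor_{K/\QQ}(B) \cong V \otimes V^{*} \cong V \otimes V$ as $G$-representations. Since $h$ factors through $G_{\RR}$, this $G$-equivariant isomorphism automatically respects the Hodge decomposition on both sides.

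The hard part, such as it is, is conceptual rather than computational: one must pin down which $G$-action on $\Cor_{K/\QQ}(B)$ is meant (conjugation, not left multiplication, since the latter produces eight copies of $V$), and reconcile the weight-two label with the fact that the naive $G$-isomorphism passes through the weight-zero space $V \otimes V^{*}$ and acquires a Tate twist from $\psi$ being $\QQ(-1)$-valued. Once these conventions are fixed the argument is purely formal, relying only on the two inputs that $\Cor_{K/\QQ}(B) \cong M_8(\QQ)$ and that $V$ carries a $G$-invariant symplectic form.
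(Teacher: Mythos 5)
Your proposal is correct and follows essentially the same route as the paper: identify $\Cor_{K/\QQ}(B) \cong M_8(\QQ) \cong V \otimes V^{*}$ with the conjugation action of $\Nm(G)$, then use the $G$-invariant symplectic form to identify $V^{*}$ with $V$. Your write-up is somewhat more careful than the paper's (which simply cites self-duality of symplectic representations), in particular in flagging the conjugation action explicitly and the Tate twist hidden in the weight-two labeling, but the mathematical content is the same.
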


\begin{proof} This comes from the fact that representations of symplectic group are self-adjoint (cf. \cite{F-H91}). 

We have $\Cor_{K/\mathbb{Q}}(B) \cong M_{8}(\mathbb{Q}) \cong V \otimes V^*$ according to our hypothesis. We also know $G$ is a subgroup of $Sp_{8}(\mathbb{Q})$ since it preserves a symplectic form. Therefore, as a representation of $G$, $\Cor_{K/\mathbb{Q}}(B) \cong V \otimes V^* \cong V \otimes V$. 

Furthermore, by the classification of representations of $Sp_{2n}$ we see that $\Nm(G)$ acts on $\Cor_{K/\mathbb{Q}}(B)$ by conjugation. Since the Killing form $Q_B$ is the only invariant quadratic form on the adjoint representation of $B^0 = Lie(G)$, this shows that $\Cor_{K/\QQ}(B^0, Q_B)$ is the unique Hodge substructure in $Sym^2(H^1(A, \QQ))$ of K3 type endowed with an intersection form invariant under the Mumford-Tate group action defined over $\QQ$.

\end{proof}

\subsection{Discriminant of the quadratic form, saturating via maximal orders}

The basis in lemma 3.1 is not optimal, for the obvious reason that it depends heavily upon the generators chosen for the quaternion, which in most cases admits no canonical choice. In this section we shall introduce maximal orders of quaternion and some of their properties, and use them to derive a more canonical formula. It is not difficult to see that the construction introduced in this section, and the discriminant calculated, can be generalized to any quaternion $B$ over an arbitrary totally real number field $K$.

\begin{definition}
(Definition 10.2.1 in \cite{Voi17}) Let $B$ be a finite-dimensional $K$-algebra. Then an \textbf{$\mathcal{O}_K$-order} $O$ is a projective $\mathcal{O}_K$-lattice that is also a subring of $B$.

(Definition 10.4.1 in \cite{Voi17}) An $\mathcal{O}_K$-order is \textbf{maximal} if it is not properly contained in another $\mathcal{O}_K$-order. 
\end{definition}

We need the following classical properties of maximal orders:

\begin{proposition}(Local maximal property, see Lemma 10.4.2 in \cite{Voi17})Suppose $R$ is a Dedekind domain and $F$ the fraction field of $R$, and $B$ a quaternion over $F$, $O$ an $R$-order of $B$. Then $O$ is a maximal order if and only if $O_{(\mathfrak{p})}$ is a $R_{(\mathfrak{p})}$-maximal order for all primes $\mathfrak{p}$ of $R$.

(Corollary 10.3.3 in \cite{Voi17}) If $O$ is an $\mathcal{O}_K$-order, then every element $\gamma \in O$ is integral over $\mathcal{O}_K$, that is, $\gamma$ is annihilated by a minimal polynomial with coefficients in $\mathcal{O}_K$.

(Corollary 10.3.6 in \cite{Voi17}) Furthermore, if $B$ comes with a standard involution (in our situation this is always satisfied), then $\gamma \in B$ is integral over $\mathcal{O}_K$ if and only if $trd(\gamma), nrd(\gamma) \in \mathcal{O}_K$.
\end{proposition}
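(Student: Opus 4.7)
The plan is to treat this proposition as a collection of three well-known facts about orders in quaternion algebras, each of which I would cite to Voight but can be justified briefly. All three are local-to-global or elementary module-theoretic arguments, so I would aim for short, self-contained sketches rather than full proofs.

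For the local maximal property, I would use the fact that an $\mathcal{O}_K$-order $O$ in $B$ is determined by its localizations, $O = \bigcap_{\mathfrak{p}} O_{(\mathfrak{p})}$ inside $B$. If $O \subsetneq O'$ is a strict enlargement of $\mathcal{O}_K$-orders, then at any prime $\mathfrak{p}$ in the (nonempty) support of $O'/O$ we obtain $O_{(\mathfrak{p})} \subsetneq O'_{(\mathfrak{p})}$, showing $O_{(\mathfrak{p})}$ is not maximal. Conversely, given a strict enlargement $O_{(\mathfrak{p})} \subsetneq \tilde O$ at a single prime, I would patch $\tilde O$ with the remaining $O_{(\mathfrak{q})}$ to produce a strict enlargement of $O$; the only thing requiring a moment of care is that the patched $\mathcal{O}_K$-lattice is still multiplicatively closed, which holds because being a subring is a local condition.

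For Corollary 10.3.3, I would invoke Cayley–Hamilton: since $O$ is a finitely generated $\mathcal{O}_K$-module containing $1$, left-multiplication by $\gamma \in O$ defines an $\mathcal{O}_K$-linear endomorphism of $O$. Its characteristic polynomial is monic with coefficients in $\mathcal{O}_K$ and annihilates $\gamma$ via the inclusion $\mathcal{O}_K \cdot 1 \hookrightarrow O$. For Corollary 10.3.6, the key observation is that any $\gamma$ in a quaternion algebra with standard involution $\gamma \mapsto \bar\gamma$ satisfies the quadratic relation $\gamma^2 - \mathrm{trd}(\gamma)\gamma + \mathrm{nrd}(\gamma) = 0$, so its minimal polynomial over $K$ divides this quadratic. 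Integrality over $\mathcal{O}_K$ is therefore equivalent to asking that $\mathrm{trd}(\gamma), \mathrm{nrd}(\gamma) \in \mathcal{O}_K$.

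Because each of these is a standard textbook fact, there is no serious mathematical obstacle; the "hard part" is purely expository, namely deciding how much of Voight's exposition to recall in order to keep the paper self-contained, since Corollary 10.3.6 in particular will be used repeatedly in the sequel to test whether a specific candidate element lies in a maximal order via its reduced trace and norm.
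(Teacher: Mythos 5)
The paper offers no proof of this proposition at all: it is stated purely as a bundle of citations to Voight (Lemma 10.4.2, Corollaries 10.3.3 and 10.3.6), so there is nothing to compare your argument against except the source. Your sketches are correct and are essentially the standard arguments one finds there: maximality is local because an order equals the intersection of its localizations and a lattice can be modified at a single prime while remaining a subring; Cayley--Hamilton applied to left multiplication on the finitely generated module $O$ gives integrality; and the relation $\gamma^2 - \mathrm{trd}(\gamma)\gamma + \mathrm{nrd}(\gamma) = 0$ coming from the standard involution gives the trace--norm criterion. The only step you gloss is the converse of the last fact: to pass from ``$\gamma$ integral'' to ``$\mathrm{trd}(\gamma), \mathrm{nrd}(\gamma) \in \mathcal{O}_K$'' you need that the minimal polynomial of an integral element has coefficients in $\mathcal{O}_K$ (this uses that $\mathcal{O}_K$ is integrally closed), together with the observation that for $\gamma \notin K$ the minimal polynomial \emph{is} the displayed quadratic, while for $\gamma \in K$ one has $\mathrm{trd}(\gamma) = 2\gamma$ and $\mathrm{nrd}(\gamma) = \gamma^2$ directly. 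This is a routine point, not a gap in substance.
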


Therefore we can define a $\mathcal{O}_K$-integral form (also called the \textbf{trace form}) by the following:

$$B \times B \rightarrow K$$
$$(\gamma, \gamma') \mapsto trd(\gamma\gamma')$$

A straightforward computation shows that when restricted to the trace $0$ space $B^0 = span_K (\alpha, \beta, \alpha\beta)$, the above trace form is given by the matrix  
$$ \left ( \begin{array}{ccc}
2a &  &  \\
 & 2b &  \\
 &  & -2ab \end{array} \right) $$
which coincides with the Killing form.

With respect to this trace form we have the following definition:

\begin{definition}
(15.1.2 in \cite{Voi17}) 
The structure theorem of modules over Dedekind domain states that we can write an order $O$ into $$O = \mathfrak{a}_1e_1 \oplus \mathfrak{a}_2 e_2 \oplus \mathfrak{a}_3 e_3 \oplus \mathfrak{a}_4 e_4 $$
as $\mathcal{O}_K$ modules, where $\mathfrak{a}_i$'s are fractional ideals.
The \textbf{discriminant} of an order $O$ is given by $$disc(O) := \Pi_{i=1}^4 (\mathfrak{a}_i)^2 det|trd(e_i e_j)_{i,j = 1,...,4} |$$
where $|trd(e_i e_j)_{i,j = 1,...,4}|$ is a $4 \times 4$ matrix with $(i,j)$-th entry given by $trd(e_i e_j)$.
\end{definition}
\begin{remark}
Some will define the discriminant to be the square root of $disc(O)$.
\end{remark}

And we have the following fact:

\begin{fact}
(Theorem 23.2.9 in \cite{Voi17}) Let $O$ be an order of quaternion $B$ over a number field $K$ with discriminant $\mathfrak{D}$. Then $O$ is maximal if and only if $$disc(O) = \mathfrak{D}^2$$
\end{fact}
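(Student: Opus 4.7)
The plan is to reduce the global equivalence to a purely local one and then verify both directions at each finite prime of $\mathcal{O}_K$. Since the discriminant ideal is local, $(disc\, O)_{\mathfrak{p}} = disc(O_{\mathfrak{p}})$ (because the trace form pairing commutes with localization of $\mathcal{O}_K$-modules), and since maximality is local by the Local Maximal Property cited above, it suffices to prove that for each prime $\mathfrak{p}$, an $\mathcal{O}_{K,\mathfrak{p}}$-order $O_{\mathfrak{p}} \subset B_{\mathfrak{p}}$ is maximal if and only if $disc(O_{\mathfrak{p}}) = \mathfrak{D}_{\mathfrak{p}}^2$, where $\mathfrak{D}_{\mathfrak{p}}$ is trivial when $B$ splits at $\mathfrak{p}$ and equals $\mathfrak{p}$ when $B$ ramifies there.

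Next I would compute the discriminant of the (up to conjugation) unique maximal order in each local situation. If $B_{\mathfrak{p}} \cong M_2(K_{\mathfrak{p}})$, every maximal order is conjugate to $M_2(\mathcal{O}_{K,\mathfrak{p}})$; in the basis of elementary matrices the trace form pairs $e_{ij}$ with $e_{ji}$, giving determinant $\pm 1$ and hence discriminant $(1) = \mathfrak{D}_{\mathfrak{p}}^2$. If $B_{\mathfrak{p}}$ is a division algebra, the unique maximal order is the valuation ring $\{\gamma \in B_{\mathfrak{p}} : nrd(\gamma) \in \mathcal{O}_{K,\mathfrak{p}}\}$. Writing it as $\mathcal{O}_L \oplus \mathcal{O}_L \Pi$, where $L/K_{\mathfrak{p}}$ is the unramified quadratic extension and $\Pi$ is a uniformizer of $B_{\mathfrak{p}}$ with $\Pi^2$ associated to a uniformizer $\pi$ of $K_{\mathfrak{p}}$, the trace form splits into two $2\times 2$ blocks on $\mathcal{O}_L$ and $\mathcal{O}_L\Pi$ whose determinants are $disc(L/K_{\mathfrak{p}})$ and $-\pi^2 \cdot disc(L/K_{\mathfrak{p}})$ respectively. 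Since $L/K_{\mathfrak{p}}$ is unramified, $disc(L/K_{\mathfrak{p}})$ is a unit, so the total discriminant generates $\mathfrak{p}^2 = \mathfrak{D}_{\mathfrak{p}}^2$.

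The converse direction then follows uniformly from the index-squared formula: whenever $O \subsetneq O'$ is a proper inclusion of $\mathcal{O}_{K,\mathfrak{p}}$-orders, comparing compatible bases yields $disc(O) = [O':O]^2 \cdot disc(O')$. Consequently any non-maximal order has discriminant strictly divisible by the discriminant of some maximal overorder, ruling out the equality $disc(O_{\mathfrak{p}}) = \mathfrak{D}_{\mathfrak{p}}^2$; and conversely, every maximal order shares the same discriminant as the canonical models by local conjugacy. Gluing the local statements across all primes $\mathfrak{p}$ completes the proof.

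The main technical obstacle is the ramified local case: the existence and uniqueness of a maximal order in a local division quaternion algebra, together with the explicit presentation $\mathcal{O}_L[\Pi]$, rests on the classification of division algebras over local fields (unique extension of the valuation, the interplay between residue degree and ramification index). Once this structural input is in hand, the determinant computation is a short linear-algebra exercise and the index-squared relation is routine, so the hard work is concentrated in the structure theory rather than in the discriminant bookkeeping itself.
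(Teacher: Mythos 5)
The paper states this as a Fact with a citation to Voight and gives no proof of its own, so there is nothing internal to compare against; your argument is the standard one from the cited source. Your proof is correct: localizing both the discriminant and the maximality condition, computing the trace-form discriminant of $M_2(\mathcal{O}_{K,\mathfrak{p}})$ in the split case and of $\mathcal{O}_L \oplus \mathcal{O}_L\Pi$ in the ramified case (where the two blocks are indeed orthogonal and the unramified discriminant is a unit, so no dyadic caveat is needed), and then using the index-squared formula $disc(O) = [O':O]^2\, disc(O')$ together with the existence of a maximal overorder to rule out non-maximal orders, is exactly the right chain of reductions.
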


We would like to use this fact and saturate our lattice, since when we wrote down the basis for $V_{K3}$, we were intrincially choosing a basis of $B^0$ given by $\alpha, \beta, \alpha \beta$. There is no guarantee that this basis is saturated. The goal is to produce as large as possible an $\mathcal{O}_K$-integral lattice on $B^0$ with respect to the Killing form.

We consider the sublattice $O \cap B^0$, for a maximal order $O$ such that the order $\mathcal{O}_K \{ 1, \alpha, \beta, \alpha \beta\}$ is properly contained in $O$. By the definition of maximal orders and trace form, and the observation that trace form coincides with the Killing form on $B^0$, we see the sublattice we obtained in this way is indeed integral.

The goal now is to compute the discriminant of the Killing form on $O \cap B^0$. We start by assuming that $B$ does not ramify at places lying over $2$.

\begin{lemma}
For a quaternion $B$ not ramifying over $2$ and a maximal order $O$, we can always write $O$ into $\mathcal{O}_K \oplus M$, where $M$ is a projective $\mathcal{O}_K$-module of rank 3 satisfying $trd(\gamma) \neq 2\gamma$ for any $\gamma \in M$. Furthermore, there exists an element $\gamma_0$ in $M$ such that its reduced trace is $1$.
\end{lemma}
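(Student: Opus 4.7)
My plan is to break the lemma into three tasks: establishing the direct-sum decomposition, proving that the reduced trace is surjective onto $\mathcal{O}_K$, and then producing an element of trace $1$ inside a suitable choice of $M$.

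First, I would observe that $O/\mathcal{O}_K$ is torsion-free: if $a\gamma \in \mathcal{O}_K$ for some nonzero $a \in \mathcal{O}_K$ and $\gamma \in O$, then $\gamma \in K$, and the integrality of $\gamma$ over $\mathcal{O}_K$ (Voight's Corollary 10.3.3, cited above) forces $\gamma \in \mathcal{O}_K$. Since $\mathcal{O}_K$ is a Dedekind domain, finitely generated torsion-free modules are projective, so $O/\mathcal{O}_K$ is projective of rank $3$ and the short exact sequence $0 \to \mathcal{O}_K \to O \to O/\mathcal{O}_K \to 0$ splits, giving $O = \mathcal{O}_K \oplus M$ with $M$ projective of rank $3$. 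The condition $trd(\gamma) \neq 2\gamma$ for nonzero $\gamma \in M$ amounts to $M \cap K = \{0\}$, which is automatic from the direct sum together with $O \cap K = \mathcal{O}_K$.

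Second, I would verify that $trd : O \to \mathcal{O}_K$ is surjective by checking its image ideal locally at every prime $\mathfrak{p}$ of $\mathcal{O}_K$. At primes where $B$ splits, $O_{\mathfrak{p}}$ is $GL_2(K_{\mathfrak{p}})$-conjugate to $M_2(\mathcal{O}_{K,\mathfrak{p}})$, so the reduced trace is the usual matrix trace and hits $1$ at the matrix unit $e_{11}$. At primes where $B$ ramifies, the hypothesis that $B$ is unramified above $2$ forces the residue characteristic of $\mathfrak{p}$ to be odd; the unique local maximal order of the division algebra $B_{\mathfrak{p}}$ then contains the ring of integers $\mathcal{O}_L$ of the unramified quadratic extension $L/K_{\mathfrak{p}}$, and the trace $\mathcal{O}_L \to \mathcal{O}_{K,\mathfrak{p}}$ is surjective in odd residue characteristic.

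Third, I would pass from surjectivity to the desired element $\gamma_0 \in M$ by adjusting the complement. Fix any initial decomposition $O = \mathcal{O}_K \oplus M_0$ from the first step; the restriction $t_0 := trd|_{M_0}$ is an $\mathcal{O}_K$-linear form whose image $J_0$ satisfies $J_0 + 2\mathcal{O}_K = \mathcal{O}_K$ by the second step. Every other complement has the form $M = \{m + \psi(m) : m \in M_0\}$ for some $\mathcal{O}_K$-linear $\psi : M_0 \to \mathcal{O}_K$, and the trace on $M$ becomes $t_0 + 2\psi$. Using the Steinitz decomposition $M_0 \cong \mathcal{O}_K^2 \oplus \mathfrak{a}$, one can write both $t_0$ and $\psi$ as coordinate triples, reducing the problem to choosing $\psi$ so that the linear form $t_0 + 2\psi$ has image equal to $\mathcal{O}_K$. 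At primes above $2$ the coprimality $J_0 + 2\mathcal{O}_K = \mathcal{O}_K$ already forces $t_0$ to have a unit value locally, while at primes away from $2$ the factor of $2$ is invertible and $\psi$ may be chosen freely. Any preimage of $1$ under $t_0 + 2\psi$ is then the desired $\gamma_0$.

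The main obstacle will be the last step: the local picture is straightforward, but assembling the correct global $\psi$ requires a Chinese Remainder argument combined with projectivity of $M_0$ to control the finitely many primes at which local surjectivity of $t_0$ fails. The first two steps are essentially standard once one imports the structural theory of maximal orders in local quaternion algebras from \cite{Voi17}.
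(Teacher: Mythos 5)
Your proposal is correct, and it takes a genuinely different route from the paper's. The paper disposes of the splitting by citing the structure theorem for modules over a Dedekind domain (as you do, just less explicitly), but for the existence of $\gamma_0$ it argues only by parity of the discriminant: since $B$ is unramified above $2$, $disc(O)=\mathfrak{D}^2$ is odd, whereas if every element of $M$ had reduced trace in $2\mathcal{O}_K$ the Gram matrix $(trd(e_ie_j))$ would have its first row in $2\mathcal{O}_K$ and the discriminant would be even. That one-line argument produces an element of $M$ whose trace is not in $2\mathcal{O}_K$, i.e.\ a unit at the primes above $2$, but it does not by itself produce an element of reduced trace exactly $1$, which is what the subsequent Corollary and Proposition actually use (to form $2\gamma_0-1\in B^0$ and compute the index of $\mathcal{O}_K\oplus(O\cap B^0)$ in $O$). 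Your three-step argument --- torsion-freeness of $O/\mathcal{O}_K$ for the splitting, local surjectivity of $trd:O\to\mathcal{O}_K$ (matrix units at split primes, the trace of the unramified quadratic extension at ramified primes of odd residue characteristic), and then replacing $M_0$ by the graph $\{m+\psi(m)\}$ so that $t_0+2\psi$ becomes surjective --- proves the stronger statement as written, and the CRT step you flag does go through: taking $\psi$ supported on a free coordinate, the local image at primes above $2$ is already the unit ideal and is unchanged by adding $2\psi$, while at the finitely many bad primes away from $2$ one imposes one congruence per prime on $\psi$, each solvable since the residue field has more than one element. So your version costs more work (the local analysis of maximal orders and the complement adjustment) but buys the precise conclusion the paper needs downstream, and it also makes visible that the hypothesis on $2$ enters only through the residue characteristic at ramified primes; the paper's version is shorter but, read literally, leaves a gap between ``trace not in $2\mathcal{O}_K$'' and ``trace equal to $1$.''
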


\begin{proof}The first part of the statement follows from the structure theorem of Dedekind domain (Theorem 7.3 in \cite{May09}). Another way of viewing the direct summand $\mathcal{O}_K$ is seeing it as the integral model of the scalars in $B$.

Since $B$ is not ramifying over $2$, $disc(O)$ is not divisible by $2$. If every element in $M$'s trace lies in $2\mathcal{O}_K$, then $disc(O)$ should be divisible by $2$, which contradicts our assumption.
\end{proof}

\begin{corollary}
$O$ can be written into $\mathcal{O}_K \oplus \mathcal{O}_K \gamma_0 \oplus M'$, where $M'$ consists of elements lying inside $B^0$
\end{corollary}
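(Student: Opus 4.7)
The plan is to use the reduced trace as a surjective $\mathcal{O}_K$-linear functional on $M$ and split it.

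First I would recall the setup from the preceding lemma: we have $O = \mathcal{O}_K \oplus M$ with $M$ a projective $\mathcal{O}_K$-module of rank $3$, and an element $\gamma_0 \in M$ with $trd(\gamma_0) = 1$. Since $O$ is an $\mathcal{O}_K$-order in $B$, every element of $O$ is integral over $\mathcal{O}_K$, and Corollary 10.3.6 in \cite{Voi17} (quoted in the preceding proposition) ensures that $trd$ maps $O$ into $\mathcal{O}_K$. Because the reduced trace is $K$-linear on $B$, its restriction is an $\mathcal{O}_K$-linear homomorphism
\[
trd|_M : M \longrightarrow \mathcal{O}_K.
\]

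Next I would observe that this map is surjective: the element $\gamma_0$ lies in $M$ and satisfies $trd(\gamma_0) = 1$, which generates $\mathcal{O}_K$ as an $\mathcal{O}_K$-module. Hence we get a short exact sequence of $\mathcal{O}_K$-modules
\[
0 \longrightarrow \ker(trd|_M) \longrightarrow M \xrightarrow{\,trd\,} \mathcal{O}_K \longrightarrow 0.
\]
Since $\mathcal{O}_K$ is a free (hence projective) $\mathcal{O}_K$-module, this sequence splits; a splitting is given explicitly by the map $a \mapsto a\gamma_0$, which is a well-defined section because $trd(a\gamma_0) = a\cdot trd(\gamma_0) = a$. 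Setting $M' := \ker(trd|_M)$, the splitting yields the internal direct sum decomposition $M = \mathcal{O}_K\gamma_0 \oplus M'$, and by construction $M' \subset B^0$. Combining with $O = \mathcal{O}_K \oplus M$ gives the desired
\[
O = \mathcal{O}_K \oplus \mathcal{O}_K\gamma_0 \oplus M'.
\]

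The argument is essentially a routine application of the splitting lemma, so I do not expect a serious obstacle. The only subtlety worth flagging is ensuring $\mathcal{O}_K\gamma_0$ really is $\mathcal{O}_K$-free of rank one (equivalently, that $\gamma_0$ is not annihilated by any nonzero element of $\mathcal{O}_K$), but this follows immediately from $trd(a\gamma_0) = a$: if $a\gamma_0 = 0$ then $a = trd(a\gamma_0) = 0$. Everything else is formal.
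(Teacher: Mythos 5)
Your proof is correct and follows essentially the same route as the paper's: the paper's one-line argument ("subtract a suitable copy of $\gamma_0$ from the generators so that they become trace zero") is exactly the retraction $m \mapsto m - trd(m)\gamma_0$ that your splitting of the sequence $0 \to \ker(trd|_M) \to M \to \mathcal{O}_K \to 0$ makes explicit. Your version is a cleaner formalization — it renders the paper's remark about $M/\mathcal{O}_K\gamma_0$ being torsion free unnecessary — but the underlying idea is identical.
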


\textbf{Proof} The fact that $\gamma_0$ has minimal reduced trace suggests that $M' = M/\mathcal{O}_K \gamma_0$ as a module is torsion free. Moreover, we can always subtract a suitable copy of $\gamma_0$ from the generators $M'$ so that they become trace zero.

This leads to the following proposition:

\begin{proposition} Assume that $B$ does not ramify at places lying over $2$, we have
$$disc(O \cap B^0) = 2\mathfrak{D}^2$$
\end{proposition}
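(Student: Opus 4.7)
The plan is to compare $O\cap B^0$ to $O$ through an orthogonal intermediate sublattice. From the preceding corollary we write $O = \mathcal{O}_K \oplus \mathcal{O}_K \gamma_0 \oplus M'$ with $\gamma_0$ of reduced trace $1$ and $M' \subseteq B^0$. An element $x + y\gamma_0 + m$ of $O$ has reduced trace $2x+y$, so it lies in $B^0$ exactly when $y = -2x$. Setting $\delta_0 := 1 - 2\gamma_0 \in B^0$, we obtain
$$O \cap B^0 = \mathcal{O}_K\, \delta_0 \oplus M'.$$

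Now let $L := \mathcal{O}_K \cdot 1 \oplus (O \cap B^0) \subseteq O$; the sum is direct inside $B$ because $K\cdot 1 \cap B^0 = 0$. I would compute $disc(L)$ in two ways and equate. First, since any reduced-trace-zero element is orthogonal to $1$ under the trace form, the Gram matrix of $L$ is block-diagonal with blocks $(trd(1\cdot 1)) = (2)$ and the Gram matrix of the trace form on $O\cap B^0$, giving
$$disc(L) = (2) \cdot disc(O \cap B^0)$$
as ideals of $\mathcal{O}_K$. Second, an element $x + y\gamma_0 + m \in O$ lies in $L$ precisely when $y \in 2\mathcal{O}_K$, so $O/L \cong \mathcal{O}_K/2\mathcal{O}_K$ as $\mathcal{O}_K$-modules. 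By the index--discriminant relation for lattices over a Dedekind domain, combined with the maximality identity $disc(O) = \mathfrak{D}^2$ from Fact 3.1,
$$disc(L) = (2)^2 \cdot disc(O) = 4\mathfrak{D}^2.$$
Equating the two expressions yields $disc(O\cap B^0) = 2\mathfrak{D}^2$, as claimed.

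The main subtlety is that $M'$ need not be free as an $\mathcal{O}_K$-module, so the classical determinant-of-basis-change argument does not literally apply to the index step. I would handle this by localizing at each prime $\mathfrak{p}$ of $\mathcal{O}_K$, where both $O$ and $L$ become free $\mathcal{O}_{K,\mathfrak{p}}$-modules and the index--discriminant formula reduces to the familiar one; since the identity of ideals can be checked prime-by-prime, this suffices. The hypothesis that $B$ is unramified at $2$ enters only once, upstream in the argument: it is what guarantees the existence of an element $\gamma_0$ of reduced trace exactly $1$ (via the preceding lemma), on which the entire decomposition of $O$, and hence the computation, rests.
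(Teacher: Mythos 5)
Your argument is correct and follows essentially the same route as the paper's own proof: both compare the orthogonal sublattice $\mathcal{O}_K \cdot 1 \oplus (O \cap B^0)$ to $O$ via the index--discriminant relation and the fact that $trd(1\cdot 1)=2$. Your version is slightly more careful about the module index $O/L \cong \mathcal{O}_K/2\mathcal{O}_K$ and the localization needed when $M'$ is not free, but the underlying idea is identical.
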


\begin{proof} The previous corollary implies $O \cap B^0 = \mathcal{O}_K (2\gamma_0 - 1) \oplus M'$. Therefore the lattice $\mathcal{O}_K \oplus (O \cap B^0)$ is an index 2 sublattice of $O$. With respect to the trace form, $\mathcal{O}_K \oplus (O \cap B^0)$ has discriminant $4disc(B)^2$.

Notice that the trace form admits an orthogonal decomposition on the summands of $\mathcal{O}_K \oplus (O \cap B^0)$, and $trd(1 \cdot 1) =2$. This implies the discriminant of the trace form, which coincides with the Killing form, on $O \cap B^0$ should be $2\mathfrak{D}^2$.
\end{proof}

\begin{corollary}Suppose $\mathfrak{p}$ is a non-dyadic place of $K$. Let $B$ be a quaternion over $K$, and $O$ a maximal order of $B$. Then the discriminant of $O_{(\mathfrak{p})} \cap B_{(\mathfrak{p})} ^0$ is $2\mathfrak{p}^2 $ if and only if $B$ ramifies at the place $\mathfrak{p}$.
\end{corollary}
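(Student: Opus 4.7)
The plan is to localize the argument of the preceding proposition at the prime $\mathfrak{p}$. By the local maximal property recalled earlier in this section, $O_{(\mathfrak{p})}$ is a maximal $\mathcal{O}_{K,(\mathfrak{p})}$-order of $B_{(\mathfrak{p})}$, and by the Fact cited above its discriminant in the sense of Definition 3.2 equals $\mathfrak{D}_{(\mathfrak{p})}^2$, where $\mathfrak{D}_{(\mathfrak{p})}$ denotes the local component of $\mathfrak{D}$ at $\mathfrak{p}$.

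Next, the preparatory lemma and its corollary (those preceding the main proposition of this subsection) go through verbatim over the DVR $\mathcal{O}_{K,(\mathfrak{p})}$, with the global hypothesis ``$B$ unramified over $2$'' replaced by the weaker local hypothesis that $\mathfrak{p}$ is non-dyadic. The crucial input was that the discriminant of $O$ is coprime to $2$; locally this is automatic, since $\mathfrak{D}_{(\mathfrak{p})}^2$ is a power of $\mathfrak{p}$ and $\mathfrak{p}$ is non-dyadic. One can therefore still exhibit a decomposition $O_{(\mathfrak{p})} = \mathcal{O}_{K,(\mathfrak{p})} \oplus \mathcal{O}_{K,(\mathfrak{p})}\gamma_0 \oplus M'$ with $trd(\gamma_0) = 1$ and $M' \subset B_{(\mathfrak{p})}^0$. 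Running the same index-$2$/orthogonal-decomposition calculation locally then yields
\[ disc\bigl(O_{(\mathfrak{p})} \cap B^0_{(\mathfrak{p})}\bigr) = 2\,\mathfrak{D}_{(\mathfrak{p})}^2. \]

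The conclusion follows from the local classification of quaternion algebras, which is a special case of Theorem 2.1: $\mathfrak{D}_{(\mathfrak{p})} = \mathfrak{p}$ exactly when $B$ ramifies at $\mathfrak{p}$, and $\mathfrak{D}_{(\mathfrak{p})} = \mathcal{O}_{K,(\mathfrak{p})}$ otherwise. Substituting into the displayed formula, the local discriminant equals $2\mathfrak{p}^2$ in the ramified case and $(2) = \mathcal{O}_{K,(\mathfrak{p})}$ (a unit ideal at non-dyadic $\mathfrak{p}$) in the unramified case; these two ideals are distinct, giving the stated equivalence. The only technical subtlety is verifying that the construction of $\gamma_0$ with reduced trace $1$ still goes through at a \emph{ramified} non-dyadic prime, but this is immediate from the explicit description $\mathcal{O}_L \oplus \mathcal{O}_L \pi$ of the unique maximal order in the local division quaternion algebra over $K_{(\mathfrak{p})}$, where $L/K_{(\mathfrak{p})}$ is the unramified quadratic extension.
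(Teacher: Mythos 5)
Your proof is correct and follows essentially the same route as the paper, whose own proof is the one-line remark that the claim follows ``directly from the local-global principle and local maximal property''; you have simply made explicit the localization of the preceding lemma, corollary, and proposition at $\mathfrak{p}$, which is exactly what that remark intends. The observation that non-dyadicity of $\mathfrak{p}$ is all that is needed locally (rather than the global hypothesis that $B$ is unramified over $2$), and the final case check via the local classification of quaternion algebras, are both consistent with the paper's argument.
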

\begin{proof}Directly from the local-global principle and local maximal property. 
\end{proof}

To extend the result to a dyadic place $v$, we recall that any quaternion over $K$ ramifying at $v$ is $K_v$-isomorphic to $$B_v = span_{K_v}\{1, \alpha, \beta, \alpha\beta: \alpha^2-\alpha +a = 0, \beta^2 = \pi, \alpha\beta = -\beta\alpha\}$$ where $F = K_v[x]/(x^2-x+a)$ is the unique unramified quadratic extension of $K_v$ and $\pi$ the uniformizer of $K_v$. We recall furthermore that the unique maximal order is given by $O_v = \mathcal{O}_F \oplus \beta \mathcal{O}_F$, viewed as an $\mathcal{O}_{K_v}$ algebra (cf. \cite{Voi17} Section 13.1). The reduced trace $trd$ on $O_v$ annihilates the $\mathcal{O}_{K_v}$ submodule $\beta\mathcal{O}_F$, and coincides with $tr_{F/K_v}$ on $\mathcal{O}_F$, and it can be linearly extended to $B_v$, hence it makes sense to talk about the trace zero $K_v$-vector space $B_v^0$, and the reduced trace form on $O_v$. 

\begin{lemma}
Let $B_v$, $\pi$, $F$, $O$, $trd$ be stated as above. Then with respect to the trace form on $O_v$, $disc(O_v \cap B_v^0) = 2 \pi^2$.
\end{lemma}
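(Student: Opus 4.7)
The plan is to exhibit an explicit $\mathcal{O}_{K_v}$-basis of $O_v \cap B_v^0$ and then write down the Gram matrix of the trace form, from which the determinant falls out after a short unit-detection argument specific to the dyadic setting.

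First I would split $O_v = \mathcal{O}_F \oplus \beta\mathcal{O}_F$ into its trace-zero part. The summand $\beta\mathcal{O}_F$ is entirely trace-zero by hypothesis, so the only work is to find trace-zero elements in $\mathcal{O}_F$. Since $\alpha^2-\alpha+a=0$ forces $\bar\alpha = 1-\alpha$, the condition $\text{tr}_{F/K_v}(u+v\alpha)=0$ for $u,v\in\mathcal{O}_{K_v}$ reads $2u+v=0$, cutting out the rank-one $\mathcal{O}_{K_v}$-submodule generated by $\zeta := 2\alpha-1$. Hence $\{\zeta,\beta,\alpha\beta\}$ is a free $\mathcal{O}_{K_v}$-basis of $O_v\cap B_v^0$.

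Next I would compute the six pairings $\text{trd}(e_ie_j)$. The cross terms $\text{trd}(\zeta\beta)$ and $\text{trd}(\zeta\cdot\alpha\beta)$ vanish immediately because both products lie in $\mathcal{O}_F\beta$, on which $\text{trd}$ is identically zero. For the remaining entries I use $\zeta^2 = 1-4a$, $\beta^2=\pi$, and the commutation relation $\beta\alpha = \bar\alpha\beta = (1-\alpha)\beta$ (which is the actual relation forced by $F/K_v$ being unramified quadratic; this is where the sloppily-written rule $\alpha\beta = -\beta\alpha$ needs to be read correctly), giving $(\alpha\beta)^2 = \alpha\bar\alpha\,\beta^2 = a\pi$ and $\beta\cdot\alpha\beta = \bar\alpha\pi$. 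Assembling these yields the block-diagonal Gram matrix
$$\begin{pmatrix} 2(1-4a) & 0 & 0 \\ 0 & 2\pi & \pi \\ 0 & \pi & 2a\pi \end{pmatrix},$$
whose determinant is $-2(1-4a)^2\pi^2$.

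To finish, I would observe that since $v$ lies over $2$, one has $4a \in \mathfrak{p}_{K_v}$, so $1-4a \equiv 1 \pmod{\mathfrak{p}_{K_v}}$ is a unit of $\mathcal{O}_{K_v}$, whence $-(1-4a)^2$ is a unit and the discriminant as an ideal equals $(2\pi^2)$. The main obstacle is purely cosmetic: one must correctly track the twist $\beta\alpha = \bar\alpha\beta$ instead of the more familiar $\alpha\beta = -\beta\alpha$, since in the ramified dyadic presentation $F/K_v$ is unramified quadratic and its involution is $\alpha \mapsto 1-\alpha$, not $\alpha\mapsto -\alpha$; overlooking this would flip a sign in $(\alpha\beta)^2$ and mis-compute the determinant.
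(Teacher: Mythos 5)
Your proof is correct, and it identifies the same lattice as the paper does --- $O_v\cap B_v^0=\mathcal{O}_{K_v}(2\alpha-1)\oplus\beta\mathcal{O}_F$ --- but computes its discriminant by a genuinely different route. The paper never writes down a Gram matrix for $O_v\cap B_v^0$ itself: it adjoins the scalar summand to form $\mathcal{O}_{K_v}\oplus(O_v\cap B_v^0)$, observes this sits inside $O_v$ with change-of-basis determinant $2$ (so discriminant $4\cdot disc(O_v)=4\pi^2$, using the known local fact $disc(O_v)=\pi^2$ for the ramified maximal order), and then divides out the orthogonal rank-one factor $trd(1\cdot 1)=2$. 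Your direct computation of the $3\times 3$ Gram matrix buys two things the paper's argument does not: it is self-contained (you do not need to quote $disc(O_v)=\mathfrak{D}^2$), and it forces you to confront --- and correctly repair --- the inconsistency in the stated presentation, since $\alpha\beta=-\beta\alpha$ is incompatible with $\alpha^2-\alpha+a=0$ in characteristic zero and must be read as $\beta\alpha=\bar\alpha\beta=(1-\alpha)\beta$; that correction is exactly what makes $(\alpha\beta)^2=a\pi$ and the cross term $trd(\beta\cdot\alpha\beta)=\pi$ come out right. The price is the extra unit-detection step for $1-4a$, which you handle correctly ($4a$ lies in the maximal ideal at a dyadic place, so $1-4a$ is a unit); the paper's index argument absorbs this into the normalization of $disc(O_v)$ and so is shorter and parallel to its non-dyadic case, but yours is the more verifiable of the two.
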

\begin{proof}
To begin with, $disc(O_v) = \pi^2$ with respect to the trace form. We notice, again, that there is a distinct element in $O_v$ that has reduced trace 1, namely $\alpha$ (by our definition). Therefore $O_v \cap B_v^0 \oplus O_{K_v} = O_{K_v}  \oplus O_{K_v}(2\alpha -1) \oplus \beta\mathcal{O}_F $ as an $\mathcal{O}_{K_v}$ module, which is an index $2$ sublattice of $O_v$, hence having discriminant $4\pi^2$. Again, with respect to the trace form, $O_{K_v}$ is orthogonal to $O_v \cap B_v^0$, and the discriminant of trace form on this submodule is $2$. Thus we conclude that $disc(O_v \cap B_v^0) = 2 \pi^2$. 
\end{proof}

Hence by the local-global principle and local maximal property of $O$, and the calculation we have done on the relation of the discriminant of a quadratic form and its corestriction (Lemma 3.2 of this paper), we have proven the following theorem:
\begin{theorem}For any quaternion $B$, $$disc(O \cap B^0) = 2\mathfrak{D}^2$$
Denote $\Lambda_{can}$ to be the lattice in $V_{K3}$ such that $\Lambda_{can}= \Cor_{K/\QQ}(O \cap B^0)$. Then $$disc(\Lambda_{can}) = 2^3 disc(K/\mathbb{Q})^3 \dot Norm_{K/\mathbb{Q}}(\mathfrak{D}^2)$$
\end{theorem}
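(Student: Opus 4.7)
The plan is to assemble both identities by combining the local discriminant computations of the previous section with the corestriction-discriminant formula from Lemma 3.2. The first equality is obtained by glueing local contributions over all places of $K$; the second follows by direct substitution.

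First I would invoke the local maximal property: at every place $v$ of $K$, the localization $O_{(v)}$ is a maximal $\mathcal{O}_{K_v}$-order, and $(O \cap B^0)_{(v)} = O_{(v)} \cap B_v^0$, so the discriminant of $O \cap B^0$ as a fractional ideal in $\mathcal{O}_K$ decomposes as a product of local contributions. The preceding corollary handles non-dyadic places: the $\mathfrak{p}$-adic valuation of $disc(O \cap B^0)$ is $2$ when $\mathfrak{p} \mid \mathfrak{D}$ and $0$ otherwise, since $2$ is a unit at non-dyadic primes. At a dyadic place $v$ unramified for $B$, the argument used in the preceding proposition (using the decomposition $O_v = \mathcal{O}_{K_v} \oplus \mathcal{O}_{K_v}\gamma_0 \oplus M'$ with $\gamma_0$ of reduced trace $1$) contributes a factor $2\mathcal{O}_{K_v}$; at a dyadic place $v$ ramified for $B$, the preceding lemma gives the contribution $2\pi^2\mathcal{O}_{K_v}$. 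Multiplying these local factors across all $v$ assembles the ideal $2\mathfrak{D}^2$, which establishes the first identity.

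For the second assertion, I would apply Lemma 3.2 to the $K$-quadratic space $(O \cap B^0, Q_B)$ of rank $n = 3$, using that the reduced-trace form agrees with the Killing form on $B^0$ (as noted in the remark following Lemma 3.1). The lemma gives
$$disc(\Lambda_{can}) = disc(K/\QQ)^3 \cdot Norm_{K/\QQ}(disc(O \cap B^0)) = disc(K/\QQ)^3 \cdot Norm_{K/\QQ}(2\mathfrak{D}^2),$$
and since $[K:\QQ] = 3$ we have $Norm_{K/\QQ}(2\mathfrak{D}^2) = 2^3 \cdot Norm_{K/\QQ}(\mathfrak{D}^2)$, yielding exactly the asserted formula.

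The main obstacle is really bookkeeping rather than new ideas: one has to be careful to treat ``discriminant'' consistently as a fractional ideal so that local-to-global passage is legitimate, and to verify that the decomposition $O \cap B^0 = \mathcal{O}_K(2\gamma_0 - 1) \oplus M'$ used earlier does produce a genuine projective $\mathcal{O}_K$-lattice whose discriminant ideal in Definition 3.X is computed by its local ones. Sign conventions on the Killing form (whose determinant is $-8a^2b^2$ in the basis $\{\alpha,\beta,\alpha\beta\}$) should also be reconciled with the negative sign appearing in Theorem 1.1, but this is cosmetic and does not affect the ideal-theoretic content.
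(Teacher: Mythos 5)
Your proposal is correct and follows essentially the same route as the paper, which likewise obtains the theorem by assembling the preceding local discriminant computations (the non-dyadic corollary, the dyadic lemma, and the local maximal property) and then applying Lemma 3.2 with $n=3$ to pass to the corestriction. Your write-up is in fact somewhat more careful than the paper's one-sentence assembly, in that you explicitly treat the dyadic-unramified case and flag the sign/ideal-versus-determinant bookkeeping.
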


\subsection{Twisting the intersection form, and an explicit example}

In this section we introduce a twist of the Killing form $Q$ that is also preserved by $G$. To begin with, we can perform a rescaling on $B^0$: $\alpha \mapsto b\alpha, \beta \mapsto a\beta, \alpha\beta \mapsto \alpha\beta$ and see that it transforms $Q_B$ into 
$$ \left ( \begin{array}{ccc}
2ab^2 &  &  \\
 & 2a^2b &  \\
 &  & -2ab \end{array} \right) $$
Taking away the common factors $ab$ in the diagonal we obtain a new quadratic form that is also preserved by $G$:
$$Q_{twi}:= \left ( \begin{array}{ccc}
2b &  &  \\
 & 2a &  \\
 &  & -2 \end{array} \right) $$
Although the canonical lattice will not be $\mathcal{O}_K$-integral with respect to $Q_{twi}$, and it will not be invariant with respect to the choice of generators anymore, the discriminant it generates still carries interesting arithmetic information from the original quaternion.

As an example, we shall calculate the canonical intersection form of the quaternion $B$ with a maximal order $O$ given in \cite{Voi09}, which is defined over the totally real cubic subfield of $\QQ(\zeta_9)$ with $disc(B)= 1$, i.e. it only ramifies at two infinite places. We will also compute $disc(\Cor_{K/\QQ}(O \cap B^0))$ with respect to $Q_{twi}$, which will become relevant in the last part of the paper.
\begin{example}
Consider the quaternion $B$ defined over the totally real cubic subfield $K$ of $\QQ(\zeta_9)$ ramifying only at two infinite places. \cite{Voi09} has given out the explicit generators for $B$ and a maximal order $O$:
$$b := -(\zeta_9+\overline{\zeta_9}), B =\left( \frac{-3,b}{K} \right)$$
$$
O := \mathcal{O}_K \oplus \mathcal{O}_K \zeta \oplus \mathcal{O}_K \eta \oplus \mathcal{O}_K \omega, \text{with }
\zeta = -\frac{1}{2}b + \frac{1}{6}(2b^2-b-4)\alpha, $$$$
\eta = -\frac{1}{2}b\beta + \frac{1}{6}(2b^2-b-4)\alpha\beta, 
\omega = -b + \frac{1}{3}(b^2-1)\alpha -b\beta + \frac{1}{3}(b^2-1)\alpha\beta 
$$

Clearly, $O \cap B^0$ is an $\mathcal{O}_K$-lattice generated by: $\zeta' =  \frac{1}{3}(2b^2-b-4)\alpha$, 
$\eta = -\frac{1}{2}b\beta + \frac{1}{6}(2b^2-b-4)\alpha\beta$, and $\omega' =  \frac{1}{3}(b^2-1)\alpha -b\beta + \frac{1}{3}(b^2-1)\alpha\beta$. The Killing form $Q_B$ with respect to the basis $\{\zeta', \eta, \omega'\}$ is 
$$ Q_B = \left ( \begin{array}{ccc}
2b^2-8 & 0 & -2 \\
0 & 2b & 4b+1 \\
-2 & 4b+1 & 8b+2 \end{array} \right) $$
Since $\mathcal{O}_K = \ZZ(b)$, we can pick a basis of $\Lambda_{can}$ to be $\{\phi(\zeta'), \phi(b\zeta'), \phi(b^2\zeta'), \phi(\eta), \phi(b\eta), ... \phi(b^2\omega') \}$, where the $\phi$ map is introduced in Section 2.2 of this paper. Then with respect to this basis, the intersection form is given by $$\left ( \begin{array}{ccccccccc}
-12 & 6 & -12 & 0 & 0 & 0& -6 & 0 & -12 \\
6 & -12 & 6 & 0 & 0 & 0 & 0 & -12 & -6 \\
-12 & 6 & -30 & 0 & 0& 0& -12 & -6 & -36 \\
0 & 0 & 0 & 0 & 12 & 6& 3 & 24 & 18 \\
0 & 0 & 0 & 12 & 6 & 36& 24 & 18 & 75 \\
0 & 0 & 0 & 6 & 36 & 30& 18 & 75 & 78 \\
-6 & 0 & -12 & 3 & 24 & 18 & 6 & 48 & 36 \\
0 & -12 & -6 & 24 & 18 & 75 & 48 & 36 & 150 \\
-12 & -6 & -36 & 18 & 75 & 78 & 36 & 150 & 156 \\
  
\end{array}\right )$$
The discriminant form of $\Lambda_{can}$ is $(\ZZ/2\ZZ)^3 \times (\ZZ/3\ZZ)^6 \times (\ZZ/9\ZZ)^3$. \footnote{At this point it would be tempting to list out all possible 11-dimensional positive definite lattices with the same discriminant, which will give an enumeration of all possible elliptic fibration structures in the Picard group of some K3 surfaces with their transcendental lattice given by this $9\times 9$ matrix. It turns out that this computation crashes Magma. Even with a quaternion defined over the totally real subfield of $\QQ(\zeta_7)$ ramifying at two infinite places, the count of possible elliptic fibrations is still too high for Magma to process.}
\end{example}

\section{Two applications}
We will utilize the invariant we produced in two ways. First of all, combining the results from lattice theory, we shall show that all the K3 surfaces arising from this construction are elliptically fibered and the torsion part of their Mordell-Weil group is always trivial. Another application will be using the invariant to locate a special Mumford fourfold that is proven to be derived equivalent to Shioda's fourfold in \cite{myself}.

\subsection{General facts about elliptic surfaces}

We review basic properties of elliptic fibration $f: S \rightarrow C$ with a section. The generic fiber is denoted as $E$, an elliptic curve. The zero section is denoted $O$. We will assume, furthermore, that $S$ has singular fibers (in particular, it is not isomorphic to $E \times C$). We have the following fact:

\begin{fact}
(Theorem 6.12 in \cite{S-S09}) For an elliptic surface $S$ with a section $C$, $Pic^0(S) \cong Pic^0(C)$.
\end{fact}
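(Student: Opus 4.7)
The plan is to show that the pullback $f^{*}: Pic^{0}(C) \to Pic^{0}(S)$ is an isomorphism of abelian varieties. Injectivity I would read off immediately from the presence of the zero section $O: C \hookrightarrow S$: since $f \circ O = \mathrm{id}_{C}$, the composition $O^{*} \circ f^{*}$ is the identity on $Pic^{0}(C)$, so no non-trivial class on $C$ can pull back to zero on $S$. Surjectivity will then follow if I can prove that the derivative of $f^{*}$ at the origin, namely the pullback map $H^{1}(C, \mathcal{O}_{C}) \to H^{1}(S, \mathcal{O}_{S})$, is an isomorphism: an injective homomorphism of abelian varieties whose derivative at the origin is surjective is automatically surjective.

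To compare these $H^{1}$'s I would run the Leray spectral sequence for $f$ with coefficients in $\mathcal{O}_{S}$. Because the fibres are connected one has $f_{*}\mathcal{O}_{S} = \mathcal{O}_{C}$, and because $C$ is a curve $H^{2}(C, \mathcal{O}_{C}) = 0$; the five-term exact sequence therefore collapses to
$$0 \to H^{1}(C, \mathcal{O}_{C}) \xrightarrow{\,f^{*}\,} H^{1}(S, \mathcal{O}_{S}) \to H^{0}(C, R^{1}f_{*}\mathcal{O}_{S}) \to 0,$$
and the problem is reduced to showing $H^{0}(C, R^{1}f_{*}\mathcal{O}_{S}) = 0$.

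For this residual vanishing I would exploit that, for a relatively minimal elliptic fibration with section, $R^{1}f_{*}\mathcal{O}_{S}$ is a line bundle on $C$ whose dual $(R^{1}f_{*}\mathcal{O}_{S})^{\vee} \cong f_{*}\omega_{S/C}$ by relative Grothendieck--Serre duality; Kodaira's canonical bundle formula then identifies the degree of this dual with $\chi(\mathcal{O}_{S})$ plus non-negative local contributions at each singular fibre. The standing assumption that $S$ carries at least one singular fibre (so in particular $S \not\cong E \times C$) forces this degree to be strictly positive, so $\deg R^{1}f_{*}\mathcal{O}_{S} < 0$ and a line bundle of negative degree on a smooth curve has no global sections. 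The main obstacle will be precisely this degree bookkeeping: keeping the normalisation of Kodaira's formula straight and confirming that the singular-fibre correction is strictly positive rather than merely non-negative. Once that is verified, the displayed short exact sequence becomes an isomorphism $H^{1}(C, \mathcal{O}_{C}) \xrightarrow{\sim} H^{1}(S, \mathcal{O}_{S})$ and the argument of the first paragraph closes.
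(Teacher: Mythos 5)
The paper does not prove this statement at all: it is imported verbatim as a Fact from Sch\"utt--Shioda (\cite{S-S09}), so there is no internal argument to compare yours against. Your proposal is, in outline, exactly the standard proof given in that reference: injectivity of $f^{*}$ from the section, the Leray five-term sequence with $f_{*}\mathcal{O}_{S}=\mathcal{O}_{C}$ and $H^{2}(C,\mathcal{O}_{C})=0$, and the vanishing of $H^{0}(C,R^{1}f_{*}\mathcal{O}_{S})$ from $\deg R^{1}f_{*}\mathcal{O}_{S}=-\chi(\mathcal{O}_{S})<0$. The logic is sound and complete in outline. One small correction to your bookkeeping: the degree of $f_{*}\omega_{S/C}$ is not ``$\chi(\mathcal{O}_{S})$ plus local corrections''; it equals $\chi(\mathcal{O}_{S})$ on the nose (there are no multiple fibres in the presence of a section), and the positivity you need comes from Noether's formula $\chi(\mathcal{O}_{S})=e(S)/12=\frac{1}{12}\sum_{v}e(F_{v})$ together with the standing assumption that at least one fibre is singular, each singular fibre having strictly positive Euler number. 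With that normalisation straightened out, your argument closes as stated.
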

\begin{corollary}
If $f: S \rightarrow \mathbb{P}^1$ with a section, then $Pic(S) \cong NS(S)$.
\end{corollary}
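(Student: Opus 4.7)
The plan is to derive this immediately from the preceding Fact together with the triviality of $\mathrm{Pic}^0(\mathbb{P}^1)$. Recall that by definition the N\'eron--Severi group sits in the exact sequence
\[
0 \to \mathrm{Pic}^0(S) \to \mathrm{Pic}(S) \to NS(S) \to 0,
\]
so to prove $\mathrm{Pic}(S) \cong NS(S)$ it suffices to show that $\mathrm{Pic}^0(S)$ vanishes.

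First I would apply Fact 4.1 to the elliptic fibration $f : S \to \mathbb{P}^1$, with the base curve $C = \mathbb{P}^1$, yielding an isomorphism $\mathrm{Pic}^0(S) \cong \mathrm{Pic}^0(\mathbb{P}^1)$. Then I would invoke the classical fact that $\mathrm{Pic}(\mathbb{P}^1) \cong \mathbb{Z}$, generated by $\mathcal{O}_{\mathbb{P}^1}(1)$ and graded by degree, so $\mathrm{Pic}^0(\mathbb{P}^1) = 0$. Combining these two gives $\mathrm{Pic}^0(S) = 0$, and the short exact sequence above collapses to the desired isomorphism.

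There is essentially no obstacle here: the content of the corollary is entirely in Fact 4.1, and the only extra ingredient is the well-known vanishing of $\mathrm{Pic}^0$ for a rational curve. The only thing I would be mildly careful about is to make sure the hypothesis ``$S$ has singular fibers'' carried in from the preceding paragraph is not actually used in the corollary itself (it is not: the claim only needs the isomorphism of Picard varieties, which holds as stated), so the corollary in fact applies to any elliptic fibration $S \to \mathbb{P}^1$ admitting a section.
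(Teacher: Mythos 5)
Your argument is exactly the intended one: the paper gives no separate proof, and the corollary is meant to follow from Fact 4.1 plus $\mathrm{Pic}^0(\mathbb{P}^1)=0$ via the exact sequence $0 \to \mathrm{Pic}^0(S) \to \mathrm{Pic}(S) \to NS(S) \to 0$, just as you write. One caution about your closing parenthetical: the standing hypothesis that $S$ has a singular fiber is in fact needed, because Fact 4.1 itself fails without it (e.g.\ for $S = E \times \mathbb{P}^1$ one has $\mathrm{Pic}^0(S) \cong E \neq 0$), so the corollary does not extend to arbitrary elliptic fibrations over $\mathbb{P}^1$; it is harmless here only because the surfaces of interest are K3, where $h^{0,1}=0$ forces $\mathrm{Pic}^0(S)=0$ anyway.
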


The Kodaira classification theorem states the following:

\begin{fact}
(4.1 in \cite{S-S09}) If a singular fiber is irreducible, then it is a rational curve with a node or a cusp. If a singular fiber is reducible, then each irreducible component is a smooth rational curve with self-intersection (-2) on the elliptic surface. Furthermore, the dual graph of each fiber is a Dynkin diagram of affine $ADE$ type.
\end{fact}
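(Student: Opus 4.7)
The plan is to follow Kodaira's original strategy, which combines intersection theory on the fiber components with the adjunction formula and a combinatorial classification. Let $F = \sum_i n_i C_i$ denote a singular fiber of $f : S \to \PP^1$, decomposed into irreducible components with multiplicities $n_i \geq 1$. Three standing ingredients drive the argument: (a) all fibers are numerically equivalent, so $F \cdot C_j = 0$ for every $j$ and $F^2 = 0$; (b) flatness and constancy of Euler characteristic force $p_a(F) = p_a(E) = 1$; and (c) Zariski's lemma for fibrations of smooth surfaces over smooth curves, which asserts that the symmetric matrix $(C_i \cdot C_j)$ is negative semidefinite with one-dimensional kernel spanned by $(n_1, \ldots, n_r)$.

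The next step is to combine adjunction with Kodaira's canonical bundle formula. Because $f$ admits a section, there are no multiple fibers, so $K_S \sim f^*D$ for some divisor $D$ on $\PP^1$. Hence $K_S \cdot C_i = 0$ for every component of every fiber, and adjunction becomes $C_i^2 = (K_S + C_i) \cdot C_i = 2 p_a(C_i) - 2$. In the irreducible case $F = nC$, the section forces $n = 1$, so $F$ is a reduced irreducible curve of arithmetic genus one; being singular and of geometric genus at most one over an algebraically closed field, the classical classification of irreducible genus-one curves leaves only a nodal or a cuspidal rational curve. In the reducible case, Zariski's lemma gives $C_i^2 < 0$, and the adjunction identity forces $p_a(C_i) = 0$ together with $C_i^2 = -2$, so each component is a smooth rational $(-2)$-curve.

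It remains to pin down the dual graph. The matrix $M = (C_i \cdot C_j)$ is integral, symmetric, has $-2$ on the diagonal, nonnegative off-diagonal entries, and is negative semidefinite of corank exactly one. Such matrices form a classically known list: deleting any single row and column yields a negative definite symmetric integral matrix of the same shape, which by the finite simply-laced Cartan classification must be a disjoint union of $A_n$, $D_n$, $E_6$, $E_7$, $E_8$ diagrams; enumerating the ways a vertex can be reattached so as to produce a corank-one kernel recovers precisely the affine diagrams $\widetilde{A}_n$, $\widetilde{D}_n$, $\widetilde{E}_6$, $\widetilde{E}_7$, $\widetilde{E}_8$. The main obstacle is this combinatorial classification; while the intersection-theoretic reductions are clean, the enumeration of corank-one negative semidefinite generalized Cartan matrices is the only genuinely nontrivial piece. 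One may either invoke Vinberg's general theorem or, as in Kodaira's original paper, perform the hands-on case analysis once the components are known to be $(-2)$-curves.
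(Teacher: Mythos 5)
The paper offers no proof of this statement: it is quoted verbatim as a classical Fact from Sch\"utt--Shioda (Section 4.1 of \cite{S-S09}), which in turn rests on Kodaira's classification. Your sketch --- Zariski's lemma plus adjunction with $K_S\cdot C_i=0$ to force smooth rational $(-2)$-components, followed by the classification of corank-one negative semidefinite generalized Cartan matrices --- is exactly the standard argument given in that reference, and it is correct (modulo the standing hypothesis of relative minimality, which is built into the definition of elliptic surface there and is needed for $K_S\cdot C_i=0$).
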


The $NS(S)$ has contributions coming from two origins: the contribution of irreducible components of singular fibers, zero sections and generic fibers are the ones called \textbf{vertical components}, and the ones coming from the Mordell-Weil group of $E$ (each point $P$ on $E$ determines a section $\overline{P}$ on $S$) are called \textbf{horizontal components}. To begin with, we have the following fact:

\begin{fact}
(Theorem 6.3 in \cite{S-S09}) Let $T$ denote the subgroup of $NS(S)$ generated by the zero section and fiber components. Then the map $P \mapsto \overline{P}$ \emph{mod} $T$ gives an isomorphism $$E(K) \cong NS(S)/T.$$
\end{fact}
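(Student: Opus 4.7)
The plan is to compare $NS(S)$ with the Picard group of the generic fiber via restriction. Throughout, let $E$ denote the generic fiber, viewed as an elliptic curve over $K = k(C)$ with origin $O_E \in E(K)$ given by the zero section. By Corollary~4.1 one has $NS(S) = Pic(S)$, so it suffices to study $Pic(S)$.

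First I would establish the localization-type exact sequence
$$\bigoplus_{c \in |C|} V_c \xrightarrow{\iota} Pic(S) \xrightarrow{\rho} Pic(E) \to 0,$$
in which $V_c$ is the free abelian group on the irreducible components of the closed fiber $f^{-1}(c)$ and $\iota$ records their classes in $Pic(S)$. Surjectivity is immediate since every closed point of $E$ is the generic point of a horizontal curve on $S$ (its Zariski closure). Exactness at $Pic(S)$ follows from the standard excision argument that a line bundle trivial on a dense open $U \subset S$ is represented by a Cartier divisor supported on $S \setminus U$, applied with $U = f^{-1}(C \setminus \Delta)$, where $\Delta \subset C$ is the discriminant locus, so that $S \setminus U$ is the union of the components of the singular fibers.

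Next I would invoke the Abel--Jacobi description of $Pic(E)$: one has a canonical splitting $Pic(E) \cong Pic^0(E) \oplus \ZZ\cdot [O_E]$ with $E(K) \xrightarrow{\sim} Pic^0(E)$ sending $P$ to $[P] - [O_E]$. Under $\rho$ the zero section $\overline{O}$ maps to $[O_E]$, and for $P \in E(K)$ the Zariski closure $\overline{P} \subset S$, promoted to a section of $f$ by properness, maps to $[P]$. Writing $V$ for the image of $\iota$, the subgroup $T = \langle \overline{O}, V \rangle$ is therefore exactly the preimage under $\rho$ of $\ZZ\cdot [O_E]$, and quotienting yields
$$NS(S)/T \;\cong\; Pic(E)/\ZZ\cdot [O_E] \;\cong\; E(K),$$
where the composite isomorphism is induced by $\overline{P} \mapsto P$.

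The main obstacle I anticipate is the exactness at $Pic(S)$ in the localization sequence; this needs the geometric input that $f$ is a proper flat morphism from a smooth surface to a smooth curve, so that $S \setminus U$ is a divisor whose components are precisely the components of the singular fibers. Once this is in hand, the homomorphism property of $P \mapsto \overline{P} \bmod T$ is a formality: if $P_1 + P_2 = P_3$ in $E(K)$, then $\overline{P_1} + \overline{P_2} - \overline{P_3} - \overline{O}$ restricts to a principal divisor on $E$ by Abel--Jacobi, hence lies in $\ker \rho = V \subset T$.
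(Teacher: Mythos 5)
The paper does not prove this statement: it is quoted verbatim as Theorem 6.3 of \cite{S-S09} (the Shioda--Tate theorem), so there is no in-paper argument to compare against. Your proof is essentially the standard one found in that reference and is correct in all essentials: the localization sequence $\bigoplus_c V_c \to Pic(S) \to Pic(E) \to 0$, the Abel--Jacobi splitting $Pic(E) \cong \ZZ\cdot[O_E]\oplus Pic^0(E)$ with $Pic^0(E)\cong E(K)$, and the identification of $T$ with $\rho^{-1}(\ZZ\cdot[O_E])$ together give exactly the desired isomorphism, and your closing remark correctly disposes of the homomorphism property. Two small points of hygiene. First, your justification of exactness at $Pic(S)$ is stated slightly too narrowly: triviality on the generic fiber only yields triviality on $f^{-1}(U)$ for \emph{some} open $U\subset C$, not a priori on $f^{-1}(C\setminus\Delta)$, so the kernel is generated by components of fibers over arbitrary closed points, including full smooth fibers; this is harmless because you defined $V_c$ for all $c\in|C|$ and, over $\PP^1$ with $\Delta\neq\emptyset$, a full fiber class already lies in the span of the components of any one singular fiber. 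Second, by reducing to $Pic(S)$ via Corollary 4.1 you have proved the statement only for base $\PP^1$ (the version in \cite{S-S09} is for $NS(S)$ over an arbitrary base curve), but that is all the paper ever uses.
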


We can say a little bit more about $T$, which is often referred to as \textbf{trivial lattice}. (Section 6.4 in \cite{S-S09}) For an elliptic surface $f: S \rightarrow C$ with a zero section $\overline{O}$, we denote $F$ to be a general fiber, $R$ the points of $C$ underneath reducible fibers, $F_v$ the fiber $f^{-1}(v)$ above $v \in C$, $\Theta_{v, 0}$: the component of $F_v$ met by the zero section and $\Theta_{v, i}$ the other components of $F_v$ ($i = 1, ..., m_v -1$) where $m_v$ is the number of components of the fiber $F_v$. We denote $T_v$ as the lattice generated by fiber components of $F_v$ not meeting the zero section: $T_v = \langle \Theta_{v, i}; i = 1, ..., m_v -1 \rangle$.

Using the above notation, the trivial lattice is defined as the orthogonal sum $$T = \langle \overline{O}, F \rangle \oplus \bigoplus_{v \in R} T_v $$
And we have the following proposition:
\begin{proposition}
(Proposition 6.6 and Section 6.5 in \cite{S-S09}) The divisor classes of $\{ \overline{O}, F, \Theta_{v, i}; v \in R, i = 1, ..., m_v -1  \}$ form a $\mathbb{Z}$-basis of $T$. The $ADE$ type of a reducible fiber is recoverable by omitting the component intersecting zero section.

(Lemma 8.3 in \cite{S-S09}) Assuming that the Euler characteristic $\chi(S) > 1$, then $\bigoplus_{v \in R} T_v$ is exactly given by the direct sum of $ADE$-type lattices.
\end{proposition}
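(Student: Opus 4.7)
The plan is to treat the two halves of the proposition separately, relying on Kodaira's classification of singular fibers (already quoted above) and standard intersection theory on smooth elliptic surfaces with a section. The structural input for the basis claim is that the defining decomposition
$$T \;=\; \langle \overline{O}, F\rangle \;\oplus\; \bigoplus_{v \in R} T_v$$
is orthogonal: $F \cdot \Theta_{v,i} = F_v \cdot \Theta_{v,i} = 0$ because fibers are numerically equivalent and a fiber has trivial intersection with each of its own components, while $\overline{O} \cdot \Theta_{v,i} = 0$ for $i \geq 1$ because a section meets a reducible fiber transversally in a single smooth point lying on exactly one component, by convention $\Theta_{v,0}$. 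Combined with the Gram matrix of $\langle \overline{O}, F\rangle$ having determinant $-1$ (from adjunction $\overline{O}^2 = -\chi(S)$, $F^2 = 0$, $\overline{O} \cdot F = 1$) and the negative-definiteness of each $T_v$ addressed below, this forces the proposed set to be $\ZZ$-independent, hence a $\ZZ$-basis of $T$ since it spans by construction.

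For the ADE-recovery statement, the content is that omitting $\Theta_{v,0}$ from the dual graph of $F_v$ always yields the corresponding finite ADE Dynkin diagram. I would argue this by identifying $\Theta_{v,0}$ with an \emph{extending vertex} of the affine ADE Dynkin diagram classifying the dual graph, since removing any extending vertex of an affine ADE diagram recovers the finite ADE diagram of the same letter. The identification can be done either by case-by-case inspection along Kodaira's list $I_n, II, III, IV, I_n^*, IV^*, III^*, II^*$, or more uniformly by identifying $\Theta_{v,0}$ with the identity component of the Néron model of the generic fiber, which the section picks out.

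Given the ADE-recovery statement, the final claim is immediate: under the $(-2)$-curve normalization, $T_v$ has Gram matrix equal to the negative of the finite ADE Cartan matrix read off from the omitted fiber, so $T_v$ is literally the corresponding ADE root lattice and $\bigoplus_v T_v$ is a direct sum of root lattices. The hypothesis $\chi(S) > 1$ rules out degenerate configurations (in particular $\chi = 0$, which forces $S \cong E \times C$ with no singular fibers, contradicting the running assumption) and keeps the Gram matrix of $\langle \overline{O}, F\rangle$ of the expected signature.

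The main obstacle, and essentially the only nontrivial step, is showing that the section always meets an extending vertex of the affine diagram in the types where several candidates exist — notably $I_n^*$, where the affine $\tilde D$-diagram has four symmetric outer nodes. The cleanest resolution is the Néron-model argument sketched above: the section, being disjoint from the singular points of $F_v$, restricts on the smooth locus to a section of the Néron model of the generic fiber and therefore lands in the identity component, which is always an extending vertex. Everything else reduces to diagram combinatorics for the finite ADE types and routine adjunction on smooth surfaces.
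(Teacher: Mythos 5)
The paper offers no proof of this proposition at all: it is quoted verbatim as background from Sch\"utt--Shioda (Prop.\ 6.6, \S 6.5, Lemma 8.3 of \cite{S-S09}), so there is nothing to compare your argument against except the standard one in that reference --- which yours essentially reproduces, correctly. The orthogonality computation ($F\cdot\Theta_{v,i}=0$ by Zariski's lemma applied to $F_v\cdot\Theta_{v,i}$, $\overline{O}\cdot\Theta_{v,i}=0$ for $i\ge 1$ by the definition of $\Theta_{v,0}$), the determinant $-1$ of the Gram matrix of $\langle\overline{O},F\rangle$, and the negative definiteness of each $T_v$ together give $\ZZ$-independence; spanning uses the relation $\Theta_{v,0}=F-\sum_{i\ge1}m_{v,i}\Theta_{v,i}$, which is worth stating explicitly since it is exactly where the multiplicity $m_{v,0}=1$ enters.

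One remark on the step you single out as the ``main obstacle'': it is not one, and the N\'eron-model argument, while correct, is heavier than necessary. You do not need to know \emph{which} extending vertex the section meets, only that it meets \emph{some} vertex of mark $1$, because removing any mark-$1$ vertex of an affine $ADE$ diagram returns the finite diagram of the same letter (for $\widetilde{D}_n$ all four outer nodes qualify). And the mark-$1$ property is immediate from intersection numbers: writing $F_v=\sum_i m_{v,i}\Theta_{v,i}$ and using that $\overline{O}$ meets only $\Theta_{v,0}$, one gets $1=\overline{O}\cdot F=m_{v,0}\,(\overline{O}\cdot\Theta_{v,0})$, forcing $m_{v,0}=1$. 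With that substitution your proof is complete and elementary; the final identification of each $T_v$ with a negative-definite $ADE$ root lattice, and hence of $\bigoplus_v T_v$ with a direct sum of such, follows as you say.
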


To complete the picture, we need to describe the fibers corresponding to the torsion points on $E(K)$ and the horizontal components coming from torsion-free part of $E(K)$:

\begin{proposition}
(Section 7.1 in \cite{S-S09}) The \textbf{primitive closure} $T'$ of $T$ in $NS(S)$ is defined to be $$T' = (T \otimes \mathbb{Q}) \cap NS(S)$$ 
(Corollary 7.2 in \cite{S-S09}) $E(K)_{tors} \cong T'/T$
\end{proposition}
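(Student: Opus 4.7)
The plan is to derive $E(K)_{tors} \cong T'/T$ directly from Fact 4.3, which identifies $E(K)$ with $NS(S)/T$ via $P \mapsto \overline{P} \bmod T$. Under this isomorphism torsion on the left corresponds to torsion on the right, so the task reduces to identifying $(NS(S)/T)_{tors}$ with $T'/T$.

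The key step is a purely lattice-theoretic observation: for any sublattice $T \subseteq L$ of a finitely generated free abelian group, the torsion subgroup of $L/T$ equals $L_T/T$, where $L_T := (T \otimes \QQ) \cap L$ is the primitive closure of $T$ in $L$. Indeed, a class $x + T$ is torsion in $L/T$ iff $nx \in T$ for some $n \geq 1$, iff $x \in T \otimes \QQ$, iff $x \in L_T$. I would apply this with $L = NS(S)$, which is finitely generated and free; in our setting the corollary following Fact 4.1 ensures $Pic(S) \cong NS(S)$, so no $Pic^0$ contribution intrudes. This immediately yields $(NS(S)/T)_{tors} = T'/T$.

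Combining these two ingredients finishes the proof: a point $P \in E(K)$ is torsion iff $\overline{P} \bmod T$ is torsion in $NS(S)/T$, iff $\overline{P}$ lies in $T'$, which is precisely the claim. The only point requiring real care is the general identification of the torsion of a quotient with the primitive closure modulo the original lattice; once that is in place, the rest follows formally. Finiteness of $T'/T$, consistent with $E(K)_{tors}$ being finite, is automatic: the containment $T' \subseteq T \otimes \QQ$ forces $\mathrm{rank}(T') = \mathrm{rank}(T)$, so the quotient has no free part. I do not anticipate a genuine obstacle, as the statement is essentially the torsion-counting half of the exact structure of the N\'eron--Severi lattice already articulated in Fact 4.3.
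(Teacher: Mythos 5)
Your argument is correct and is exactly the standard derivation of the cited Corollary 7.2: the paper states this proposition as a quotation from Sch\"utt--Shioda without giving a proof, and your route --- Fact 4.3 ($E(K)\cong NS(S)/T$) combined with the purely lattice-theoretic identification of the torsion subgroup of $NS(S)/T$ with the saturation $T'/T$ --- is precisely the argument underlying that reference. The only hypothesis worth making explicit is that $NS(S)$ is torsion-free (true here, since for a K3 surface $NS(S)=\mathrm{Pic}(S)$ embeds in $H^2(S,\ZZ)$, which has no torsion); without it the chain of equivalences ``$nx\in T$ for some $n\ge 1$ iff $x\in (T\otimes\QQ)\cap NS(S)$'' would require extra care.
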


In particular, if $E(K)_{tors}$ is trivial, then $T$ embeds primitively into $NS(S)$. (Section 11.1 in \cite{S-S09}) But in general this is not the case. We define the \textbf{essential lattice} to be $$L(S) = T(S)^{\bot} \subset NS(S)$$
And lattice theory gives

$$| disc(L(S))| = \big | \frac{disc(NS(S))}{disc(T(S))} \big | |NS(S):(L(S) \oplus T(S)) |^2$$

There is also a map (Lemma 11.2, Section 11.4 in \cite{S-S09}) from $E(K)$ to $L(S)_{\mathbb{Q}}$ such that a point $P$ is being taken to a representative congruent to $\overline{P} = f^{-1}(P)$ modulo $T(S)_{\mathbb{Q}}$.

Moreover, there is a proposition by Nikulin that states the characteristic of $NS(S)$ as an overlattice of $L(S) \oplus T(S)$.

\begin{proposition}
(Proposition 1.4.1 in \cite{Nik80}) Let $\Lambda$ be an even lattice. A given embedding $\Lambda \hookrightarrow \Lambda'$ of even lattices, such that $\Lambda'/\Lambda$ is a finite abelian group, will be called an \textbf{even overlattice} of $\Lambda$. If we denote $H_{\Lambda'} = \Lambda'/\Lambda$, then since we have the chain of embeddings $\Lambda \hookrightarrow \Lambda'  \hookrightarrow \Lambda^{'*} \hookrightarrow \Lambda^*$, $H_{\Lambda'}$ is obviously a subgroup of $A_{\Lambda} = \Lambda^* / \Lambda$. Moreover, $(\Lambda^{'*}/\Lambda)/H_{\Lambda'} = A_{\Lambda'}$

This gives rise to a correspondence $\Lambda' \rightarrow H_{\Lambda'}$. In fact it determines a bijection between even overlattices of $\Lambda$ and subgroups $H$ of $A_{\Lambda}$ such that $q_{\Lambda}|H = 0$. In particular, $(q_{\Lambda}|(\Lambda^{'*}/\Lambda))/H_{\Lambda'} = q_{\Lambda'}$
\end{proposition}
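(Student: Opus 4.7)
The plan is to exhibit the bijection by writing down an explicit inverse map, tracking integrality and evenness throughout via the chain $\Lambda \subset \Lambda' \subset \Lambda^{'*} \subset \Lambda^*$. Given an even overlattice $\Lambda \hookrightarrow \Lambda'$, integrality of $\Lambda'$ yields $\langle \Lambda', \Lambda \rangle \subset \ZZ$, so that $\Lambda' \subset \Lambda^*$ and $H_{\Lambda'} := \Lambda'/\Lambda$ is a well-defined subgroup of $A_{\Lambda}$. Evenness of $\Lambda'$ says $\langle x,x \rangle \in 2\ZZ$ for every $x \in \Lambda'$; reducing modulo $2\ZZ$ gives $q_{\Lambda}|H_{\Lambda'} = 0$.

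For the reverse direction, given an isotropic subgroup $H \subset A_{\Lambda}$, I would set $\Lambda'_H := \pi^{-1}(H)$, where $\pi : \Lambda^* \twoheadrightarrow A_{\Lambda}$ is the projection. Integrality reduces to showing $\langle x,y \rangle \in \ZZ$ for $x, y \in \Lambda'_H$; this follows from the polarization identity $2 b_{\Lambda}(\bar x, \bar y) = q_{\Lambda}(\bar x + \bar y) - q_{\Lambda}(\bar x) - q_{\Lambda}(\bar y)$ in $\QQ/2\ZZ$, where every term on the right vanishes by isotropy of $H$, forcing $b_{\Lambda}(\bar x, \bar y) = 0$ in $\QQ/\ZZ$. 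Evenness of $\Lambda'_H$ is then immediate since $q_{\Lambda}(\bar x) = 0$ in $\QQ/2\ZZ$. The two constructions are mutually inverse by the set-theoretic identities $\pi^{-1}(\pi(\Lambda')) = \Lambda'$ (using $\Lambda' \subset \Lambda^*$) and $\pi(\pi^{-1}(H)) = H$.

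For the discriminant form identification, I would show that $\Lambda^{'*}/\Lambda$ equals the orthogonal complement $H_{\Lambda'}^{\perp} \subset A_{\Lambda}$ with respect to $b_{\Lambda}$: an element $y \in \Lambda^*$ lies in $\Lambda^{'*}$ iff $\langle y, x \rangle \in \ZZ$ for all $x \in \Lambda'$, which after reducing modulo $\Lambda$ reads precisely $b_{\Lambda}(\bar y, H_{\Lambda'}) = 0$. Consequently $A_{\Lambda'} = \Lambda^{'*}/\Lambda' = (\Lambda^{'*}/\Lambda)/(\Lambda'/\Lambda) = H_{\Lambda'}^{\perp}/H_{\Lambda'}$, and the induced quadratic form is well-defined on this quotient exactly because $H_{\Lambda'}$ is isotropic; unwinding the definition of $q_{\Lambda'}$ on $\Lambda^{'*}/\Lambda'$ shows it agrees with the induced form from $q_{\Lambda}|H_{\Lambda'}^{\perp}$.

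The main technical subtlety is keeping straight the distinction between the values of $b_{\Lambda}$ in $\QQ/\ZZ$ and those of $q_{\Lambda}$ in $\QQ/2\ZZ$: extracting $b_{\Lambda}|H = 0$ in $\QQ/\ZZ$ from $q_{\Lambda}|H = 0$ in $\QQ/2\ZZ$ via the polarization identity is the essential bridge that makes the integrality of $\Lambda'_H$ work. Once that step is in hand, the rest is routine bookkeeping with the short exact sequences $0 \to H_{\Lambda'} \to H_{\Lambda'}^{\perp} \to A_{\Lambda'} \to 0$ and $0 \to \Lambda'/\Lambda \to \Lambda^*/\Lambda \to \Lambda^*/\Lambda' \to 0$ induced by the chain of lattices.
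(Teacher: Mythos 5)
The paper supplies no proof of this proposition: it is quoted verbatim as a background fact from Nikulin (Proposition~1.4.1 of \cite{Nik80}). Your argument is correct and is essentially the standard proof of that result: the one genuinely delicate step, namely deducing $b_{\Lambda}|_{H}=0$ in $\QQ/\ZZ$ from $q_{\Lambda}|_{H}=0$ in $\QQ/2\ZZ$ via the polarization identity (which uses that $H$ is closed under addition, so $\bar x+\bar y\in H$), is handled correctly, and the identification $\Lambda^{'*}/\Lambda=H_{\Lambda'}^{\perp}$ yielding $A_{\Lambda'}=H_{\Lambda'}^{\perp}/H_{\Lambda'}$ with the induced form is exactly how the discriminant-form clause is established in Nikulin.
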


\subsection{First application: Elliptic fibration structure of the K3 surfaces determined by $V_{K3}$}

We start with a lemma that guarantees all K3 surfaces we are considering admits an elliptic fibration.

\begin{lemma}
All K3 surfaces $S$ obtained via Mumford-Galluzzi constructions admit elliptic fibrations with section $f: S \rightarrow \mathbb{P}^1$.
\end{lemma}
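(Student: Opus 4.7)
The plan is to prove that $NS(S)$ contains the hyperbolic plane $U$ as an orthogonal direct summand, since this is a standard equivalent criterion for the existence of a Jacobian elliptic fibration $f: S \to \PP^1$ on a projective K3 surface (see, e.g., Kondo's criterion via the Torelli theorem).

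First, I would set up the numerics. Since $V_{K3}$ has rank $9$ and Hodge numbers $(1,7,1)$, its signature is $(2,7)$, and the transcendental lattice $T(S)$ is a finite-index overlattice of $V_{K3}$ of the same rank. Hence $\mathrm{rk}\, NS(S) = 22 - 9 = 13$ with signature $(1,12)$. Because $NS(S) \oplus T(S)$ is a finite-index sublattice of the unimodular K3 lattice $\Lambda_{K3}$, the classical gluing theory of Nikulin gives $q_{NS(S)} \cong -q_{T(S)}$ and in particular $\ell(A_{NS(S)}) = \ell(A_{T(S)})$. Since $A_{T(S)}$ is a quotient of $A_{V_{K3}}$, whose rank is bounded by $\mathrm{rk}(V_{K3}) = 9$, we obtain the key bound $\ell(A_{NS(S)}) \leq 9$.

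Next I would apply Nikulin's decomposition theorem (Theorem 1.14.2 / Corollary 1.13.5 in \cite{Nik80}): an even indefinite lattice $L$ of signature $(s_+, s_-)$ with $s_\pm \geq 1$ and $\mathrm{rk}(L) \geq \ell(A_L) + 2$ admits $U$ as an orthogonal direct summand, provided the $2$-adic discriminant form of $L$ does not obstruct the splitting. Here $\mathrm{rk}(NS(S)) = 13 \geq 9 + 2 = 11 \geq \ell(A_{NS(S)}) + 2$, which is a comfortable margin. This yields an orthogonal decomposition $NS(S) \cong U \oplus L'$; the isotropic vector coming from $U$ provides an elliptic fibration structure, and the other standard generator of $U$ supplies the section class (after suitable application of Weyl group reflections to reach a nef elliptic class, as in the PSS construction).

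The main obstacle is the hypothesis on the $2$-adic part of the discriminant form in Nikulin's splitting criterion: in principle the hyperbolic summand guaranteed by his theorem could be $U(2)$ rather than $U$, which would give only a genus-one fibration without section. I would address this by using the explicit computation of $\mathrm{disc}(\Lambda_{can}) = -2^3 \mathrm{disc}(K/\QQ)^3 \mathrm{Norm}_{K/\QQ}(\mathrm{disc}(B))^2$ from the previous section, combined with the assumption that $B$ is unramified above $2$, to control the $2$-torsion of $A_{NS(S)}$: its $2$-length is bounded by $3$, so the margin $13 - 3 = 10$ is large enough to guarantee a genuine $U$ factor (not merely $U(2)$). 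An alternative route, if the $2$-adic analysis is delicate, is to cite Kondo's theorem directly, that every K3 surface with $\rho \geq 12$ admits a Jacobian elliptic fibration, which dispenses with the case-by-case check.
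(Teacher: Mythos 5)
Your proposal is correct and follows essentially the same route as the paper: bound the length of the discriminant group of $NS(S)$ by $9$ via the rank of the transcendental part, invoke Nikulin's splitting theorem to extract a $U$ summand from the rank-$13$ lattice $NS(S)$, and conclude by Kondo's criterion. The only difference is that the paper cites the version of Nikulin's result requiring $\mathrm{rank} \geq \ell + 3$ (Corollary 1.4.3 in \cite{Dol82}), which is satisfied since $13 > 9+3$ and which sidesteps the $2$-adic caveat (the $U$ versus $U(2)$ issue) that you correctly flag and then have to argue away.
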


\begin{proof} A result by Kondo \cite{Kon92} states that a K3 surface admits an elliptic fibration with a section to $\mathbb{P}^1$ if its Picard group is the direct sum of $U$ with a negative-definite even lattice. Another result by Nikulin (cf. Corollary 1.4.3 in \cite{Dol82}) states that an even indefinite lattice $\Lambda$ with $rank \Lambda \geq \lambda(\Lambda) +3$, there exists some even lattice $\Lambda'$ such that $\Lambda \cong \Lambda' \oplus U$. This condition is satisfied in our case, since $Pic (S)$ has signature $(1, 12)$ and $13= rank (Pic(S)) > 9+3 \geq \lambda(Pic(S)) +3$. Done.
\end{proof}

This does not imply that the elliptic fibration structure is unique. In most cases, the K3 surface can have multiple different elliptic fibration structures.

\begin{example}
Consider the example given in Section 3.3 of this paper. The $ADE$ lattice of the resulting Picard group can be the following: $A_1$, $A_1^{\oplus 2}$, ..., $A_1^{\oplus 4}$, $A_1\oplus A_2, A_1^{\oplus 2}\oplus A_2$, $A_1^{\oplus 3}\oplus A_2, A_1 \oplus A_2^{\oplus 2}$, $D_4$.
\end{example}

It is not hard to see, simply by counting the number of generators, that the rank of $ADE$ singular fibers is always not greater than $10$ for any K3 surface arising from Mumford's construction. And we have the following theorem:

\begin{theorem}
All Mumford-Galluzzi K3 surface with the canonical intersection form has trivial Mordell-Weil torsion group, except when the canonical intersection form can be realized as the discriminant form of a rank $3$ lattice. In that case, the Mordell-Weil torsion group can be $\ZZ/2\ZZ$.
\end{theorem}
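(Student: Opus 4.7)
The plan is to translate the Mordell-Weil torsion into an isotropy condition on the discriminant group of the trivial lattice, and then exploit the rigid discriminant structure of $T_S$ from Theorem 3.1 together with Nikulin's theory of discriminant forms for K3 surfaces. By the material of Section 4.1, $E(K)_{tors} \cong T'/T$, where $T = U \oplus W$ is the trivial lattice, $W$ is the negative-definite $ADE$ root lattice of the reducible fibers, and $T'$ is the primitive closure of $T$ inside $NS(S)$. Since $U$ is unimodular, $A_T = A_W$, and $T'/T$ embeds into $A_W$ as an isotropic subgroup with respect to the discriminant quadratic form $q_W$.

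The first step is to rule out odd-prime torsion. By Theorem 3.1,
$$|disc(T_S)| = 2^3 \cdot disc(K/\QQ)^3 \cdot Norm_{K/\QQ}(\mathfrak{D})^2,$$
so for any odd prime $p$ the $p$-valuation of $|disc(T_S)|$ equals $3 v_p(disc(K/\QQ)) + 2 v_p(Norm(\mathfrak{D}))$. Combined with the Nikulin correspondence $q_{NS(S)} = -q_{T_S}$ for K3 surfaces, this tightly constrains the $p$-primary structure of $A_{NS(S)}$, and hence of $A_W$. A prime-by-prime analysis using the local classification of discriminant forms over $\ZZ_p$, together with the known structure of $A_W$ for $W$ an $ADE$ lattice of rank at most $10$, rules out any $p$-isotropic subgroup of $A_W$ compatible with the gluing to $L := (T')^\perp \subset NS(S)$.

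The more delicate case is $p = 2$. Under the generic assumption that $disc(K/\QQ)$ and $Norm(\mathfrak{D})$ are odd, the $2$-part of $|disc(T_S)|$ is exactly $2^3$, leaving just enough room for at most one $\ZZ/2\ZZ$ isotropic class in $A_W$. A $\ZZ/2\ZZ$ torsion element yields a primitive overlattice $W'$ of $W$ of index $2$ inside $NS(S)$, and applying the gluing formula
$$|disc(NS(S))| \cdot [NS(S) : W' \oplus U \oplus L]^2 = \tfrac{1}{4}\,|disc(W)| \cdot |disc(L)|,$$
together with $|disc(NS(S))| = |disc(T_S)|$, pins down the genus of $L$. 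Tracking the $2$-adic length of $q_{T_S}$ through the gluing shows that $L$ must be a rank-$3$ even negative-definite lattice with $q_L = -q_{T_S}$, so such torsion can appear only when $q_{T_S}$ is realizable as the discriminant form of a rank-$3$ even negative-definite lattice. Conversely, given such a realization, Nikulin's existence theorem for primitive embeddings allows one to construct an elliptic fibration with the advertised $\ZZ/2\ZZ$ torsion.

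The main obstacle will be the two-directional analysis at $p = 2$: the "only if" direction requires a careful rank-counting argument pinning $\operatorname{rank}(L) = 3$, using that the $2$-adic length $\ell_2(q_{T_S})$ equals the minimal generator count $3$ of the $2$-part $(\ZZ/2\ZZ)^3$ of $A_{T_S}$, and that no smaller-rank even lattice carries this discriminant form. The odd-torsion exclusion likewise hinges on a careful enumeration of compatible pairs $(W, \text{isotropic subgroup})$, for which I would rely on Nikulin's existence and uniqueness theorem for even lattices with prescribed discriminant form and signature.
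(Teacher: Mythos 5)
Your proposal is a genuinely different route from the paper's: the paper simply restricts to $ADE$ configurations of rank at most $10$ and reads off Shimada's tables (Theorems 2.14 and 7.1 of \cite{Shi05}), which list exactly which pairs (root lattice, torsion group) occur on elliptic K3 surfaces, finding that only $8A_1$, $9A_1$ and $A_3+6A_1$ admit nontrivial torsion (always $\ZZ/2\ZZ$); it then uses the fact that no totally real cubic field has discriminant a power of $2$, so the odd part of $A_{\Lambda_{can}}$ coming from $disc(K/\QQ)^3$ has length at least $3$, forcing $\mathrm{rank}(L)\geq 3$ and hence $\mathrm{rank}(W)\leq 8$, which leaves only $8A_1$. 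You instead try to rederive everything from Nikulin's discriminant-form calculus.

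The genuine gap is in your odd-prime exclusion. The condition ``$T'/T$ is an isotropic subgroup of $A_W$ compatible with the gluing to $L$'' is necessary but far from sufficient, and discriminant-form bookkeeping alone cannot rule out odd torsion. Concretely, $W=3A_2\oplus 2A_1$ has rank $8$, $A_W$ contains the isotropic element $(1,1,1,0,0)$ of order $3$, and for a rank-$3$ lattice $L$ with suitably large $3$-part one cannot derive a contradiction purely from orders and lengths of glue groups. What actually kills such configurations is the root-preservation condition in Shimada's Theorem 7.1 (equivalently the height-pairing contribution formula): the index-$3$ overlattice of $3A_2$ is $E_6$, which acquires new roots, so no elliptic fibration has fiber type exactly $3A_2\oplus 2A_1$ with a $3$-torsion section; more generally a $p$-torsion section forces $\sum_v \mathrm{contr}_v(P)\geq 4$, which for $p=3$ already requires six $A_2$ fibers and hence rank $12$. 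Your proposal never invokes this geometric input, and without it the odd-prime case does not close. A secondary issue: your $p=2$ analysis leans on $\ell_2(q_{T_S})=3$ to force $\mathrm{rank}(L)=3$, but since $A_W$ for the candidate configurations is itself a $2$-group it can contribute to the $2$-part of $A_{NS(S)}$, so the clean way to get $\mathrm{rank}(L)\geq 3$ is the paper's observation about the odd part of $disc(K/\QQ)^3$; the $2$-adic gluing can then be used, as you sketch, only to finish off $9A_1$ and $A_3+6A_1$ if one does not want to invoke the rank bound.
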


\begin{proof} We look at the list of $ADE$ fibers with rank not greater than $10$ in \cite{Shi05} (cf. Theorem 2.14 in \cite{Shi05}). Only in three cases, namely $\bigoplus_{v \in R} T_v = 8A_1, 9A_1$ or $A_3 + 6A_1$, can the Mordell-Weil torsion group be nontrivial, which is $\mathbb{Z}/2\mathbb{Z}$. Notice that in any case, the discriminant of $\bigoplus_{v \in R} T_v $ only has $2$-torsions.

Theorem 7.1 in \cite{Shi05} states the if and only if condition of a lattice $\Sigma$ of $ADE$-type and a finite abelian group $G$ to be realized as  $ADE$ fibers of an elliptic K3 surface with $G$ isomorphic to the torsion of Mordell-Weil group. It is noticable that in all cases the overlattice of $ADE$ lattice only admits $2$-torsions or $4$-torsions. Since there are no totally real cubic field with discriminant a power of $2$, the essential lattice of the elliptic fibration will have rank at least $3$. Therefore, the only configuration when the Mordell-Weil group can have nontrivial torsion group is when the $ADE$ lattice is $8A_1$, and the discriminant form of the original lattice is length $3$ (meaning that it can be realized as a discriminant form of a lattice of rank at least $3$).

\end{proof}

\subsection{General facts about Shioda's abelian fourfold}

When the Mumford's fourfold has complex multiplication, general theory of CM abelian varieties states that the Mumford-Tate group in this case degenerates into a unitary group defined over $K$. It was proven in \cite{myself} that we can obtain a CM Mumford's fourfold, denoted $A_M$, by modifying the Hodge structure of the Jacobian $A_S$ of $y^2 = x^9-1$. Furthermore, we know that the ground field of the quaternion generating this example is given by the totally real subfield of $\QQ(\zeta_9)$, which we will denote as $K$ in this section. We also know that the CM field of $A_M$ is $\QQ(\zeta_3)$. We will show that $A_M$ is constructed by the quaternion ramifying only at two infinite places by computing the discriminant of its $V_{K3}$.

First we recall Deligne's construction of CM abelian varieties (see \cite{Del82} Example 3.7).  

By CM-field we mean a quadratic totally imaginary extension of a totally real field; a CM-algebra is a finite product of CM-fields. Let $E = E_1 \times ... \times E_n$ be such an algebra. Then there exists an involution $\iota$ acting by complex conjugation on each of the factors of $E$. Let $F = F_1 \times ... \times F_n$ denote the subalgebra in $E$ fixed by $\iota$.

We denote $S$ for the set $Hom_{\QQ}(E, \CC)$. Then a CM-type for $E$ is a subset $\Sigma \subset S$ such that $S = \Sigma \sqcup \iota\Sigma$.

The complex structure is given by the following decomposition:
$$E \otimes_{\QQ} \CC \cong \CC^S = \CC^{\Sigma} \oplus \CC^{\iota\Sigma}.$$

Let $\CC^{\Sigma}$ be the $V^{1,0}$ space and $\CC^{\iota\Sigma}$ its complex conjugate. Then we can view $E$ as $H_1(A, \QQ)$ of some CM abelian variety $A$, for example, given by $A(\CC) = \CC^{\Sigma} / \Sigma(\mathcal{O}_E)$. One can see that the $H_1$ with $\QQ$-coefficients of any CM abelian varieties arises from this construction.

With this language, we can describe $A_S$ via the Artinian ring $E := \QQ[x]/(x^8 + x^7 + ... +x +1) \cong \mathbb{Q}(\zeta_9) \times \mathbb{Q}(\zeta_3) = H_1(A, \mathbb{Q})$; $H_1(A, \mathbb{Z})$ can be obtained by considering the embedding of products of fractional ideals in $\mathbb{Z}(\zeta_9)$ and $\mathbb{Z}(\zeta_3)$ into $H_1(A, \mathbb{Q})$ up to a positivity condition which is induced by the Riemann condition. Therefore $A_S$ is isogeneous to the abelian variety obtained by $E/ \mathcal{O}_E$ with the complex structure given by $ E_{\CC} = \CC^{\Sigma} \oplus \CC^{\overline{\Sigma}} \oplus \CC^{\tau} \oplus \CC^{\overline{\tau}} $
where $\Sigma$ denotes the set of embeddings $\QQ(\zeta_9) \hookrightarrow \CC$ such that their restriction onto $\QQ(\zeta_3)$ is identical. In particular, $A_S$ splits into a product of abelian threefold $A_0$ and elliptic curve.

Furthermore we recall the Mumford-Tate representation of $A_S$ and $A_M$ respectively:

\begin{proposition}
The Mumford-Tate group of $A_S$ is $$G_S = \{ x \in \mathbb{Q}(\zeta_9) | Nm_{\mathbb{Q}(\zeta_9)/\mathbb{Q}(\zeta_9 + \overline{\zeta_9})}(x) = 1 \}$$
The representation of $G_S, \mathbb{R}$ on $E_\mathbb{R}$ is given by 
\begin{center}
\begin{tabular}{ c c c }
 $G_S, \mathbb{R} \cong U(1)^3 \hookrightarrow$ & $SU(2)^3 \hookrightarrow$ & $SU(2)^4$ \\ 
   & $(u,v,w) \mapsto$ & $(u,v,w,uvw)$ \\  

\end{tabular}
\end{center}

The Mumford-Tate group $G_M$ of $A_M$ is isomorphic to $G_S$ over $\RR$. Up to an automorphism on $G_M$ the Mumford-Tate representation of $G_M, \RR$ is

\begin{center}
\begin{tabular}{ c c c }
 $G_M, \mathbb{R} \cong U(1)^3 \hookrightarrow$ & $SU(2)^3 \hookrightarrow$ & $SU(2)^4$ \\ 
   & $(u,v,w) \mapsto$ & $(u^{-1},v,w,uvw)$ \\  

\end{tabular}
\end{center}

\end{proposition}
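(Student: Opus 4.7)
The plan is to treat the two halves of the proposition separately: first identify $G_S$ and its representation on $E_\RR$ via the standard CM dictionary applied to $A_S$, and then transport the computation to $A_M$ by tracking how the Hodge-structure modification of \cite{myself} alters the defining characters of the Mumford-Tate cocharacter.

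For $A_S$, I would begin by reading off the CM type $\Sigma$ from the action of the automorphism $(x,y) \mapsto (\zeta_9 x, y)$ on the holomorphic differentials $dx/y, x\,dx/y, x^2\,dx/y, x^3\,dx/y$. The eigenvalues $\zeta_9, \zeta_9^2, \zeta_9^3, \zeta_9^4$ single out the subset $\{1,2,4\} \subset (\ZZ/9\ZZ)^\times$ as the $\QQ(\zeta_9)$-part of $\Sigma$ and the single embedding $\zeta_3 \mapsto \zeta_9^3$ as the $\QQ(\zeta_3)$-part. A short Galois-theoretic check shows that $\{1,2,4\}$ has trivial stabilizer in $(\ZZ/9\ZZ)^\times$, so the reflex field equals $\QQ(\zeta_9)$ and the Mumford-Tate group of the pair $(E,\Sigma)$ is the full norm-one subtorus $\{x \in \QQ(\zeta_9)^\times : \mathrm{Nm}_{\QQ(\zeta_9)/K}(x) = 1\}$, which is $G_S$. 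Base-changing to $\RR$ through the three pairs of complex embeddings of $\QQ(\zeta_9)$ yields $G_{S,\RR} \cong U(1)^3$, acting on $\QQ(\zeta_9)\otimes\RR\cong\CC^3$ coordinate-wise; on the remaining summand $\QQ(\zeta_3)\otimes\RR\cong\CC$ the action factors through $\mathrm{Nm}_{\QQ(\zeta_9)/\QQ(\zeta_3)}$, which on norm-one elements is exactly the product map $(u,v,w)\mapsto uvw$. Composing these four one-dimensional characters with the standard inclusions $U(1)\hookrightarrow SU(2)$ as diagonal maximal tori then produces the stated factorization $U(1)^3 \hookrightarrow SU(2)^3 \hookrightarrow SU(2)^4$.

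For $A_M$, I would appeal to the description in \cite{myself} of $A_M$ as the Hodge-structure modification of $A_S$ obtained by swapping the $(1,0)$ and $(0,1)$ pieces on exactly one of the three complex planes inside $\QQ(\zeta_9)\otimes\RR$. The rational vector space $E$ is unchanged, but the Hodge cocharacter is replaced by one in which the character on the affected factor has been inverted. Consequently $G_M$, defined as the smallest $\QQ$-torus whose real points contain the new cocharacter, is a different $\QQ$-form of the same compact real Lie group $U(1)^3$ --- hence $G_{M,\RR} \cong G_{S,\RR}$ --- and the only change in the representation on $E_\RR$ is the flip $u \mapsto u^{-1}$ on that one factor, giving exactly $(u,v,w)\mapsto (u^{-1},v,w,uvw)$. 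The phrase ``up to an automorphism on $G_M$'' in the statement absorbs both the choice of which of the three factors is flipped and the choice of an isomorphism $G_{M,\RR}\cong U(1)^3$.

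The principal obstacle is verifying that the $\QQ(\zeta_3)$-summand --- that is, the elliptic-curve factor of $A_S$ --- is untouched by the Hodge-structure modification, so that the fourth coordinate in the embedding into $SU(2)^4$ remains $uvw$ and not $u^{-1}vw$. This amounts to extracting from \cite{myself} the statement that the derived equivalence acts as the identity on the $\QQ(\zeta_3)$-isotypic component and twists only inside the $\QQ(\zeta_9)$-isotypic component; granted that input, the remainder of the argument is bookkeeping in the CM dictionary.
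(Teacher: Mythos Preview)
The paper does not prove this proposition: it is introduced with ``Furthermore we recall the Mumford-Tate representation of $A_S$ and $A_M$ respectively'' and then stated without argument, the content being imported from \cite{myself}. So there is no proof in the paper to compare against; your proposal supplies an argument where the paper simply cites one.

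Your outline is correct in structure and would yield a valid proof, but one step merits sharpening. You pass from ``$\{1,2,4\}$ has trivial stabilizer in $(\ZZ/9\ZZ)^\times$'' directly to ``the Mumford--Tate group is the full norm-one torus.'' Trivial stabilizer gives reflex field $\QQ(\zeta_9)$ and primitivity of the CM type, but primitivity alone does not in general force the Hodge group to be all of $U^1_E$: degenerate primitive CM types exist once the dimension is large enough. Here $g=3$ and the CM field is cyclic, so the conclusion does hold, but you should either cite the relevant fact or verify directly that the Galois translates of $\mu-\iota\mu=[1]+[2]+[4]-[5]-[7]-[8]$ span a rank-$3$ sublattice of $X_*(U^1_{\QQ(\zeta_9)})$, which is a two-line determinant computation. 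The remainder of your argument---reading the $\QQ(\zeta_3)$-action off the norm $N_{\QQ(\zeta_9)/\QQ(\zeta_3)}$ and obtaining the $A_M$-representation by inverting one character after the Hodge-structure swap---is correct, and the obstacle you flag about the elliptic-curve factor being untouched is precisely the input one must take from \cite{myself}; the paper takes that as given too.
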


\subsection{Second application: the CM Mumford's fourfold that is derived equivalent to Shioda's fourfold}

By observing the Mumford-Tate representation, we can say that switching two of the complex conjugate embeddings $\Sigma = \{\sigma_1, \sigma_2, \sigma_3 \}$ in $  E_{\CC} = \CC^{\Sigma} \oplus \CC^{\overline{\Sigma}} \oplus \CC^{\tau} \oplus \CC^{\overline{\tau}} $ will make $A_S$ into an $A_M$. Since the discriminant we calculated is a $(2,2)$ form, it is invariant under this perturbation. It is then possible to compute $disc(\Lambda)$ by finding the right analogue on the $A_S$ side, where $\Lambda$ denotes the lattice in $V_{K3}$ induced by $H^1(A_M, \ZZ)$, which can have the canonical intersection form or the twisted intersection form. We do so by carefully tracking the change of basis we are taking:
$$\left ( \begin{array}{c}
\text{Invariants and determinants} \\
\text{in} \ H^1(A_S) \ \text{and} \ H^2(A_S)
\end{array} \right )
\xrightarrow[\text{change}]{\text{coordinate}}
\left ( \begin{array}{c}
\text{Invariants and determinants} \\
\text{of classical }SU(2,\RR)^3\ \text{reps}
\end{array} \right )
$$

To start with, we know $H^1(A_S, \ZZ)$ is the direct sum of two fractional ideals of $\QQ(\zeta_9)$ and $\QQ(\zeta_3)$. Since the class group of both field is trivial, $A_S$ is isogenous to $E/\ZZ(\zeta_9) \oplus \ZZ(\zeta_3)$. 

Next we show that the elliptic curve component does not enter the picture:

\begin{lemma}
The analogue space of $V_{K3}$ in $Sym^2(H^1(A_S, \QQ))$ lies inside $Sym^2(H^1(A_0, \QQ))$.
\end{lemma}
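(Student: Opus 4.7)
The plan is to combine the isogeny decomposition $A_S \sim A_0 \times E$ with a character analysis under the Mumford--Tate torus. First I would use the splitting $H^1(A_S, \QQ) = H^1(A_0, \QQ) \oplus H^1(E, \QQ)$ to induce
\[
Sym^2(H^1(A_S, \QQ)) = Sym^2(H^1(A_0, \QQ)) \oplus \bigl(H^1(A_0, \QQ) \otimes H^1(E, \QQ)\bigr) \oplus Sym^2(H^1(E, \QQ)),
\]
of respective dimensions $21$, $12$, and $3$. Each summand is $G_S$-invariant because $H^1(A_0)$ and $H^1(E)$ are sums of distinct character isotypic components under $G_{S,\RR} \cong U(1)^3$ (with characters $\pm e_i$ and $\pm(e_1+e_2+e_3)$, respectively, where $e_i$ is the character attached to the $i$-th real embedding of $K$). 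Since $A_M$ is obtained from $A_S$ only by modifying the Hodge filtration on the same $\QQ$-vector space $H^1$, the $V_{K3}$-analogue is well-defined as a $\QQ$-subspace, still MT-invariant, and hence respects this splitting.

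Next I would match characters. By Galluzzi's Proposition~3.1, $V_{K3} \otimes \CC = W_{2,0,0} \oplus W_{0,2,0} \oplus W_{0,0,2}$; restricted to the maximal torus $U(1)^3 \subset SU(2)^3$ this carries characters $\{\pm 2 e_i\}_{i=1}^3$ together with three copies of the trivial character $(0,0,0)$. A direct comparison with the character sets of the three summands shows that $\pm 2 e_i$ cannot appear in $H^1(A_0) \otimes H^1(E)$ (whose characters are all of the form $\pm e_i \pm (1,1,1)$) nor in $Sym^2(H^1(E))$ (whose characters are only $\pm(2,2,2)$ and $(0,0,0)$). Hence the $6$-dimensional non-trivial-character part of the $V_{K3}$-analogue is forced to sit inside $Sym^2(H^1(A_0))$.

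The delicate step, and the main obstacle, is the three trivial-character vectors, since the trivial isotypic component of $Sym^2(H^1(A_S))$ is $4$-dimensional ($3$ from $Sym^2(H^1(A_0))$, $1$ from $Sym^2(H^1(E))$). To finish I would exploit the cyclic Galois action of $Gal(K/\QQ) = \ZZ/3$: the three trivial-character vectors of $V_{K3}$, lying one each in $W_{2,0,0}, W_{0,2,0}, W_{0,0,2}$, are permuted cyclically by Galois and hence span the regular representation $\QQ[\ZZ/3] \cong \QQ \oplus \rho_2$. The $2$-dimensional irreducible summand $\rho_2$ cannot embed in the Galois-trivial line of $Sym^2(H^1(E))$, so it is automatically contained in $Sym^2(H^1(A_0))$. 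The remaining $\QQ$-rational trivial line is then pinned down, using the explicit formula $V_{K3} = \Cor_{K/\QQ}(B^0, Q_B)$ together with the $\phi$-map of Section~2.2, as the cyclic trace $\sum_i v_{e_i} \cdot v_{-e_i} \in Sym^2(H^1(A_0))$; making this identification precise, which requires unwinding how the quaternion $B$ acts on the CM decomposition of $H^1(A_S)$ and then invoking that $\phi(B^0)$ lands in the diagonal block supported on $H^1(A_0)$, will be the hardest part of the argument.
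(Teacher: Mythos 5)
Your overall strategy---splitting $Sym^2(H^1(A_S,\QQ))$ into the three torus-stable summands of dimensions $21$, $12$, $3$ and comparing characters---is the same weight-theoretic idea as the paper's one-sentence proof, only carried out in more detail. The genuine gap sits exactly at the step you call a ``direct comparison.'' You use the single symbol $e_i$ both for the character of the CM torus on the $i$-th summand of $H^1(A_0)$ and for the weight of the $i$-th $SU(2)$-factor in Galluzzi's decomposition $V_\CC \cong W_{1,0,0}\boxtimes W_{0,1,0}\boxtimes W_{0,0,1}$. These are not the same basis of the character lattice: restricting $W_{1,0,0}\boxtimes W_{0,1,0}\boxtimes W_{0,0,1}$ to the torus gives the eight characters $\pm f_1 \pm f_2 \pm f_3$ (all sign combinations), while the CM decomposition gives $\pm e_1, \pm e_2, \pm e_3, \pm(e_1+e_2+e_3)$; matching the two sets forces $f_1+f_2+f_3 = e_1+e_2+e_3$ and, up to relabeling, makes the three characters $2f_i$ equal to the three sums $e_j+e_k$ with $j \neq k$. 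Hence the nonzero characters of $V_{K3}\otimes\CC = W_{2,0,0}\oplus W_{0,2,0}\oplus W_{0,0,2}$ on the CM torus are $\pm(e_j+e_k)$, not $\pm 2e_i$, and $e_j+e_k = -e_i + (e_1+e_2+e_3)$ \emph{does} occur as a character of $H^1(A_0)\otimes H^1(E)$. Concretely, the highest weight vector of $W_{2,0,0}$ is $v_{(+,+,+)}v_{(+,-,-)} - v_{(+,+,-)}v_{(+,-,+)}$ in terms of the $SU(2)^3$-weight vectors, and $v_{(+,+,+)}$ is precisely the weight-$(1,1,1)$ vector of $H^1(E)_\CC$ that the paper's proof singles out; so this vector of $V_{K3}$ has a nonzero component in the cross term. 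The step you dismiss as routine is therefore the one that fails, and your (genuinely nice) Galois-orbit treatment of the three trivial-character vectors is moot until it is repaired.

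To be fair to you, the paper's own proof---that the weight-$(1,1,1)$ vector generates the $27$-dimensional $W_{2,2,2}$ under symmetric square---only disposes of the extreme weights $\pm(2,2,2)$ and is silent on these mixed products, so you are not overlooking an argument the paper actually supplies. To salvage the statement you would need either to argue that the intended ``analogue space'' is not the literal transport of $V_{K3}(A_M)$ but the subspace $\bigoplus_i Sym^2(W_i) \subset Sym^2(H^1(A_0,\CC))$ (characters $\pm 2e_i$ and $0$) that the paper computes with in the following paragraphs, together with a justification that this substitution preserves the discriminant; or to show that projecting $V_{K3}$ to $Sym^2(H^1(A_0,\QQ))$ along the other two summands is an isometry onto its image for the relevant quadratic form. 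Either way, the character identification $2f_i$ versus $2e_i$ must be confronted explicitly rather than elided.
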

\begin{proof}
The weight $(1,0)$ vector of the elliptic curve part has $SU(2)^3$ representation weight $(1,1,1)$, which will generate the $27$-dimensional space when we take symmetric power. 
\end{proof}

In fact we can do better than this: we know $H^1(A_0, \CC) = W_1 \oplus W_2 \oplus W_3$ where each $W_i$ is a complex conjugation pair of embedding $\QQ(\zeta_9) \hookrightarrow \CC$. The distinguished quadratic form on $W_i$ that the Mumford-Tate group preserve will be the trace form $\langle x,y \rangle = Tr_{\QQ(\zeta 9)/K}(xy) $. We can extend this form to $Sym^2(W_i)$, but we need to choose the right multiple of it.

\begin{lemma}
The standard $SU(2)$ preserves the norm form of $\CC$ over $\RR$. The discriminant of trace form is $4$ with respect to $\{1,i\}$ basis.

The corresponding discriminant of the Killing form is $2\cdot 4^3$. 
\end{lemma}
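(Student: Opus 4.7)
The plan is a three-step direct computation, corresponding to the three assertions of the lemma. The underlying idea is that the adjoint representation of $SU(2)$ is $\mathrm{Sym}^2$ of its defining representation, so the Killing form on the former is induced from the trace form on the latter, and the discriminant can be read off a small Gram matrix in an explicit basis.

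First, for the assertion that the standard $SU(2)$ preserves the norm form of $\CC$ over $\RR$, I would observe that in the CM case the Mumford--Tate group is $U(1)^3 \subset SU(2)^3$, and the relevant torus $U(1)$ acts on $W_i \cong \CC$ by scalar multiplication, i.e., by rotation of the underlying real plane; rotations preserve the norm form $N_{\CC/\RR}(z)=z\bar z = x^2+y^2$, which is precisely the $\RR$-bilinear form that extends the trace form introduced in the previous paragraph. Second, I would compute the Gram matrix of the trace form $(x,y)\mapsto \mathrm{Tr}_{\CC/\RR}(x\bar y)$ in the basis $\{1,i\}$: one gets $\mathrm{Tr}(1\cdot 1)=2$, $\mathrm{Tr}(i\cdot \bar i)=\mathrm{Tr}(1)=2$, and $\mathrm{Tr}(1\cdot \bar i)=\mathrm{Tr}(-i)=0$, so the Gram matrix is $\mathrm{diag}(2,2)$ with discriminant $4$.

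For the third assertion, I would use the isomorphism between the adjoint representation of $\mathfrak{su}(2)$ and $\mathrm{Sym}^2$ of the defining representation, and compute the induced form on $\mathrm{Sym}^2(W_i)$ directly. Concretely, I would embed $\mathrm{Sym}^2 V \hookrightarrow V\otimes V$ by $e_1 e_2 \mapsto e_1\otimes e_2+e_2\otimes e_1$, and pull back the tensor form $\langle v\otimes w, v'\otimes w'\rangle=\langle v,v'\rangle\langle w,w'\rangle$. Using the basis $\{e_1^2, e_1e_2, e_2^2\}$ and the trace form $\mathrm{diag}(2,2)$ on $W_i$, a one-line computation gives the Gram matrix $\mathrm{diag}(4,8,4)$ of discriminant $4\cdot 8\cdot 4=128=2\cdot 4^3$.

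The only real subtlety — flagged in the paragraph preceding the lemma as "we need to choose the right multiple" — is normalizing the symmetric-square form so that it genuinely matches the Killing form normalization used in Section 3. This is not a serious obstacle, since any $SU(2)$-invariant symmetric bilinear form on the adjoint representation is unique up to scalar; the scaling above is the one compatible with the Killing form $Q_B$ on $B^0$ (with respect to the basis $\{\alpha,\beta,\alpha\beta\}$) for a split quaternion, so that the local discriminant $2\cdot 4^3$ feeds correctly into the global corestriction formula of Lemma 3.2 in the subsequent comparison between $A_S$ and $A_M$.
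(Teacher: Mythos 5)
The paper states this lemma with no proof at all, so there is nothing to compare against on that side; your computation is, as far as I can tell, a correct and complete verification of both numbers, and it is consistent with how the lemma is used immediately afterwards (your general pattern $\mathrm{diag}(s,t)\mapsto\mathrm{diag}(s^2,2st,t^2)$, i.e.\ $d\mapsto 2d^3$, is exactly what produces the subsequent ``$2\cdot 3^3$'' for $\mathrm{Sym}^2(W_i)$). Two normalization choices deserve the emphasis you give them. First, you compute the discriminant of $\mathrm{Tr}_{\CC/\RR}(x\bar y)$ rather than $\mathrm{Tr}_{\CC/\RR}(xy)$ as literally written in the preceding paragraph of the paper; this is the right call, since $\mathrm{Tr}(xy)$ is \emph{not} rotation-invariant (it picks up $e^{2i\theta}$) and has Gram matrix $\mathrm{diag}(2,-2)$ of discriminant $-4$, whereas the invariant form --- the norm form the lemma names --- gives $\mathrm{diag}(2,2)$ and the stated $+4$. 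Second, the embedding $\mathrm{Sym}^2V\hookrightarrow V\otimes V$ sending $e_1^2\mapsto e_1\otimes e_1$ but $e_1e_2\mapsto e_1\otimes e_2+e_2\otimes e_1$ is what yields $\mathrm{diag}(4,8,4)$ and hence $128=2\cdot 4^3$; the other natural convention ($e_1^2\mapsto 2\,e_1\otimes e_1$) gives $2^{11}$ instead, so the lemma's number does depend on this choice. Your appeal to uniqueness of the invariant form up to scalar justifies that \emph{some} such normalization is legitimate, but it does not by itself single out this one; since the paper itself only says ``we need to choose the right multiple'' without specifying it, your proof is at least as precise as the source, and I would only suggest stating explicitly that the chosen integral structure on $\mathrm{Sym}^2(W_i)$ is the image of the divided-power-type basis $\{e_1^2,e_1e_2,e_2^2\}$, since the discriminant is a lattice invariant and not just an invariant of the quadratic space.
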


Therefore the discriminant of the induced quadratic form on $Sym^2(W_i)$ will have discriminant $2\cdot 3^3$. We see that upon taking corestriction, $disc(\Lambda) = 2^3 \cdot 3^9 \cdot 3^{12}$. Notice that $disc(\Lambda)$ has only two prime factors 2 and 3, we see that the only possible local places of ramification for $B$ are those that lies above $2$ and $3$. But since $2$ is still a prime in $\mathcal{O}_K$, and $3$ completely ramified, $B$ can ramify at neither of the place while satisfying the corestriction trivial condition. Hence $B$ does not ramify at any finite place.


\begin{theorem}
$A_M$ is a CM point on the Shimura curve obtained by the quaternion $B$ over the totally real subfield of $\QQ(\zeta_9)$ that only ramifies at two infinite places.
\end{theorem}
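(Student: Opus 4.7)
The plan is to reduce the statement to Theorem 2.3, which identifies a quaternion over a global field uniquely by its set of ramified places. Three inputs are already in hand: from the Mumford-Tate computation in Proposition 4.5 the ground field is forced to be the totally real cubic subfield $K$ of $\QQ(\zeta_9)$; Mumford's signature condition $B \otimes_{\QQ} \RR \cong \HH \oplus \HH \oplus M_2(\RR)$ fixes two infinite places in $Ram(B)$; and $\Cor_{K/\QQ}(B)$ is required to be trivial. What remains is to rule out every finite place, and I would do this by computing the discriminant of $\Lambda \subset V_{K3}$ directly from the CM side and matching it against the formula in Theorem 3.13.

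The first step is to transfer the calculation from $A_M$ to $A_S$. By Proposition 4.5 the Mumford-Tate representations of $A_M$ and $A_S$ differ only by a sign on one $SU(2)$ factor, which in Deligne's CM language amounts to swapping one conjugate pair of embeddings in the CM type. Since the quadratic form on $V_{K3}$ is symmetric and Mumford-Tate invariant, its discriminant is preserved under this flip, so the numerical computation can be performed on the $A_S$ side where everything is explicit CM data.

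Next I would locate the analogue of $V_{K3}$ inside $Sym^2(H^1(A_S, \QQ))$. Using the isogeny splitting $A_S \sim A_0 \times E$ into a CM abelian threefold and an elliptic curve, an $SU(2)^3$-weight count eliminates any contribution from the elliptic factor, so the relevant summand sits inside $Sym^2(H^1(A_0, \QQ))$. On each of the three $U(1)$-isotypic complex summands $W_i$ of $H^1(A_0, \CC)$ the unique Mumford-Tate invariant form is a rescaling of the trace form of $\QQ(\zeta_9)/K$; promoting to $Sym^2(W_i)$ rescales the discriminant in a predictable way, and then corestriction from $K$ down to $\QQ$ via Lemma 3.2 produces a discriminant of $\Lambda$ whose only rational prime factors are $2$ and $3$.

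Comparing this against the formula $disc(\Lambda_{can}) = 2^3 \, disc(K/\QQ)^3 \, Norm_{K/\QQ}(\mathfrak{D})^2$ of Theorem 3.13 (with the appropriate twisted-form adjustment from Section 3.3) forces $Norm_{K/\QQ}(\mathfrak{D})$ to have no prime divisor outside $\{2, 3\}$. But in $\mathcal{O}_K$ the prime $2$ is inert and $3$ is totally ramified, so above each of these rational primes there is a single prime of $K$, say $\mathfrak{p}_2$ and $\mathfrak{p}_3$. Ramification of $B$ at $\mathfrak{p}_2$ alone would make the local Hasse invariant of $\Cor_{K/\QQ}(B)$ at $2$ equal to $1/2$, contradicting Mumford's corestriction-triviality condition; the same goes for $\mathfrak{p}_3$, and ramifying at both also fails. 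Hence $Ram(B)$ contains no finite place, and Theorem 2.3 then singles out $B$ uniquely. The main obstacle I expect is tracking the precise powers of $2$ and $3$ through the trace-form normalizations and the $Sym^2$ and corestriction steps, but this is a finite rescaling computation and even a coarse form of it suffices, since the classification argument only needs the qualitative statement that no primes outside $\{2,3\}$ can appear.
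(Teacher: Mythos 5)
Your proposal follows essentially the same route as the paper: transfer the discriminant computation from $A_M$ to $A_S$ via the sign flip on one $SU(2)$ factor, discard the elliptic factor by a weight count, compute the discriminant of the invariant form on $Sym^2(W_i)$ from the trace form and corestrict to see that only the primes $2$ and $3$ can appear, then use the inertness of $2$ and total ramification of $3$ in $\mathcal{O}_K$ together with the corestriction-triviality condition to exclude all finite ramification. Your explicit Hasse-invariant bookkeeping at the last step is a slightly more detailed rendering of what the paper states in one line, but it is the same argument.
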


\end{document}